\def\thanks#1{\protected@xdef\@thanks{\@thanks
		\protect\footnotetext{#1}}}
\newcommand{\subjclass}[2][2020]{%
	\let\@oldtitle\@title%
	\gdef\@title{\@oldtitle\footnotetext{#1 \emph{Mathematics subject classification:} #2}}%
}
\newcommand{\keywords}[1]{%
	\let\@@oldtitle\@title%
	\gdef\@title{\@@oldtitle\footnotetext{\emph{Key words and phrases:} #1.}}%
}
\DeclareSymbolFont{largesymbolsstix}{LS2}{stixex}{m}{n}
\DeclareMathDelimiter{\lbrbrak}{\mathopen}{largesymbolsstix}{"EE}{largesymbolsstix}{"14}
\DeclareMathDelimiter{\rbrbrak}{\mathclose}{largesymbolsstix}{"EF}{largesymbolsstix}{"15}
\newcommand{\nocontentsline}[3]{}
\newcommand{\tocless}[2]{\bgroup\let\addcontentsline=\nocontentsline#1{#2}\egroup}
\theoremstyle{definition}
\newtheorem{defin}{Definition}[section]
\newtheorem{rem}[defin]{Remark}
\newtheorem{rems}[defin]{Remarks}
\newtheorem{no}[defin]{Notation}
\newtheorem{ex}[defin]{Example}
\newtheorem{exs}[defin]{Examples}
\theoremstyle{plain}
\newtheorem{theor}[defin]{Theorem}
\newtheorem{lem}[defin]{Lemma}
\newtheorem{prop}[defin]{Proposition}
\newtheorem{cor}[defin]{Corollary}
\newtheoremstyle{dotless-thm}
{3pt}
{3pt}
{}
{}
{}
{.}
{.5em}
{}
\theoremstyle{dotless-thm}
\newenvironment{defin}{}{}
\newenvironment{rem}{}{}
\newenvironment{ex}{}{}
\newenvironment{exs}{}{}
\newenvironment{theor}{\itshape }{}
\newenvironment{lem}{\itshape }{}
\newenvironment{prop}{\itshape }{}
\newenvironment{cor}{\itshape }{}
\def\Z{{\mathbb{Z}}}
\def\im{{\mbox{im}}\hspace{0.04cm}}
\def\R{{\mathbb{R}}}
\def\K{{\mathbb{K}}}
\def\I{{\mathcal{I}}}
\author[1]{Catarina Faustino}
\author[1]{Thomas Kahl}
\thanks{This research was supported by FCT (\emph{Funda\c c\~ao para a Ci\^encia e a Tecnologia}, Por\nolinebreak tugal) through the first author's PhD scholarship UI/BD/152071/2021 and projects UIDB/00013/2020 and UIDP/00013/2020.} 
\affil{\small{Centro de Matem\'atica,
		Universidade do Minho, 
		4710-057 Braga,
		Portugal\\
		\texttt{catarina.0109.cf@gmail.com}, 
		\texttt{kahl@math.uminho.pt}
	}
}
\begin{document}
	
	\title{The homology digraph of a preordered space}
	
	\date{}

	\subjclass{55N35, 55U25}
	
	\keywords{Homology digraph, directed homology, directional graded vector space, bilinear relation, K\"unneth theorem}

	\maketitle
	
	\begin{abstract}
			This paper studies a notion of directed homology for preordered spaces, called the homology digraph. We show that the homology digraph is a directed homotopy invariant and establish variants of the main results of ordinary singular homology theory for the homology digraph. In particular, we prove a K\"unneth formula, which  enables one to compute the homology digraph of a product of  preordered spaces from the homology digraphs of the components.
	\end{abstract}

\begin{sloppypar}
	
\section{Introduction}
	
This paper is situated in the field of directed algebraic topology, which studies spaces equipped with a supplementary structure representing the flow of time or a direction of traversal. There exist a variety of frameworks for directed topology, such as partially ordered spaces (pospaces) \cite{FajstrupGR, Goubault, reldi}, d$\mbox{-}$spaces \cite{GrandisDirHoTI, GrandisBook}, streams \cite{KrishnanConvCat, KrishnanNorth}, and (pre)cubical sets \cite{FajstrupGR, GrandisBook}. Our goal in this work is to develop a theory of directed homology for preordered spaces \cite{GrandisBook}. The aforementioned kinds of directed spaces can be naturally interpreted as preordered spaces, and so this theory also applies in these other settings. 

%Directed algebraic topology has applications in concurrency theory \cite{FGHMR}. For example, precubical sets are directed topological objects that may be used to model concurrent systems. ???????

Directed algebraic topology has applications in concurrency theory, the domain of theoretical computer science that deals with systems of simultaneously executing processes. The state space of a concurrent system can be modeled as a directed space, and the executions of the system are then represented by directed paths, i.e., paths that evolve in accordance with the given direction. It turns out that two such execution paths can be considered equivalent from a computer science point of view if and only if they are homotopic relative to the endpoints through directed paths. This link between topology and concurrency theory is actually  the origin of the field of directed algebraic topology. Further connections between directed algebraic topology and concurrency theory are developed in \cite{FGHMR, FajstrupGR, Goubault}.

An important line of research in directed algebraic topology is directed homology. A number of different notions of directed homology have been introduced in the literature, see, e.g., \cite{DubutGG, FahrenbergDiH, GoubaultJensen, GrandisBook, hgraph}. A natural approach to directed homology is to consider ordinary homology with a supplementary directional structure. Probably the best-known example of such a concept of directed homology is the one defined by Grandis for cubical sets, where the extra structure is a degreewise additive preorder \cite{GrandisDiH, GrandisBook}. Unfortunately, it turns out that this preorder is always discrete (in positive degrees) for cubical subsets of subdivided cubes, which correspond to Euclidean pospaces, i.e., subspaces of \(\R^n\) equipped with the natural partial order. A notion of directed homology that distinguishes between homotopy equivalent Euclidean pospaces with different directed structures is the homology graph \cite{hgraph}, which can be defined for various kinds of directed spaces. This is a directed graph with vertices the homology classes and edges representing a directional relation between them. For example, in the homology graph of the Euclidean pospace on the left hand side of Figure \ref{figeuclid}, there exists an edge from the homology
class representing the lower hole to the homology class representing the upper hole. In contrast, there are no edges between nontrivial homology classes of degree 1 in the homology graph of the Euclidean pospace on the right hand side of Figure \ref{figeuclid}.

%Unfortunately, this concept does not distinguish a very important class of spaces that are frequently considered in the literature, the subspaces of $R^n$, which are called Euclidean spaces. 

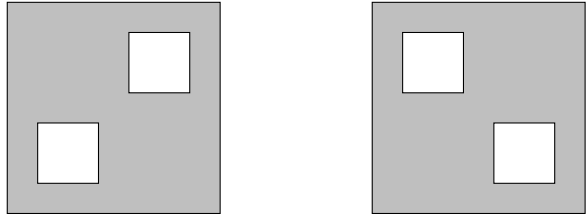
\begin{figure} \label{figeuclid}
	\begin{center}
		\begin{tikzpicture}[scale=0.4,on grid]
			\path[draw, fill=lightgray] (0,0)--(7,0)--(7,7)--(0,7)--cycle;
			\path[draw, fill=white]
			(1,1)--(3,1)--(3,3)--(1,3)--cycle
			(4,4)--(6,4)--(6,6)--(4,6)--cycle;
			\path[draw, fill=lightgray] (12,0)--(19,0)--(19,7)--(12,7)--cycle;
			\path[draw, fill=white]
			(13,4)--(15,4)--(15,6)--(13,6)--cycle
			(16,1)--(18,1)--(18,3)--(16,3)--cycle;		
		\end{tikzpicture}
		\caption{Homotopy equivalent Euclidean pospaces with different directed structures}
	\end{center}
\end{figure}

%Another important definition of directed homology that we will consider here is the homology graph of a precubical set, which is a directed graph with vertices the homology classes and edges representing a directional relation between them \cite{hgraph}. This version of directed homology, the homology graph, captures the directed structure of some spaces such as circles with different directed structures and also some Euclidean complexes \ref{figeuclid}.

%In every homology theory we want to have some important properties established such as the product of directed spaces. This 
The product of directed spaces models the parallel composition of independent concurrent systems and is hence a construction of fundamental importance from both a topological and a computer science point of view. One can therefore argue that a directed homology theory should provide a Künneth formula that permits one to compute the directed homology of a product of directed spaces from the directed homology of the components. %In a similar way to ordinary homology, our aim is to have a Künneth formula. Unfortunately, this formula does not hold for the definition of homology in the sense of Grandis \cite{CatarinaMestrado} or even for the homology graph. 
Unfortunately, the topological Künneth theorem does not extend in a satisfying way to either Grandis' directed homology (see \cite{CatarinaMestrado}) or the homology graph. In the case of the homology graph, this is due to a lack of compatibility between the edge relation and the linear structure of homology.

%In this work, we propose to change the definition of homology graph so that the edge relation becomes better integrated with the linear structure of homology. We define this modified homology graph - which, for the sake of distinction, we call homology digraph - for preordered topological spaces. 
In this paper, we introduce the homology digraph of a preordered space, which is a modified homology graph where the edge relation and the linear structure are better integrated with each other. More precisely, we work over a field and introduce directional graded vector spaces, which are graded vector spaces equipped with what we call a bilinear relation. The homology digraph of a preordered space is then defined to be the directional graded vector space obtained by equipping the homology of the space with the bilinear relation generated by the edge relation of the usual homology graph. We show that the homology digraph is a directed homotopy invariant and demonstrate its compatibility with the main ingredients of ordinary singular homology theory. In particular, we extend the Künneth theorem to the homology digraph and prove, more precisely, that the cross product homomorphism is an isomorphism of directional graded vector spaces.

%and examine its compatibility with the main ingredients of ordinary singular homology theory. In particular, we show that %, over a field, the homology digraph of the product of two preordered spaces is determined by the homology digraphs of the components. 

\section{Preordered spaces}

The purpose of this section is to present some basic material on preordered spaces and to indicate how they relate to other kinds of directed spaces.

\paragraph{The category of preordered spaces.} A \emph{preordered space} is a topological space \(X\) with  a preorder relation \(\preceq_X\) on it \cite{GrandisBook}. Preordered spaces form a category, in which morphisms are monotone (i.e., preorder-preserving) continuous maps. This category is complete and cocomplete, with limits and colimits created in the category of topological spaces. In particular, the topological product and the topological coproduct of a family \((X_i)_{i\in \I}\) of preordered spaces are preordered spaces. The preorder of the product \(\prod_{i\in \I} X_i\) is the componentwise preorder, i.e., $$ (x_i)\preceq_{\prod_{i\in \I} X_i} (y_i) \iff \forall \,i\in \I\quad x_i \preceq_{X_i} y_i.$$ 
The preorder of the coproduct is defined as follows:
$$x \preceq_{\coprod_{i\in \I} X_i} y \iff \exists \, j \in \I\quad x, y \in X_j, \; x \preceq_{X_j} y.$$
A preordered space \(A\) is called a \emph{preordered subspace} of a preordered space \(X\) if \(A\) is a  subspace of \(X\) and the relation \(\preceq_A\) satisfies
\[\forall \, a,b \in A\quad a \preceq_A b \iff a \preceq_X b.\]
The forgetful functor from the category of preordered spaces to the category of topological spaces has both a left and a right adjoint. The left adjoint associates with a topological space the same space with the \emph{discrete} preorder, i.e., the preorder where every element is only related to itself. The right adjoint equips a space with the \emph{indiscrete} preorder, where any two elements are related.

\paragraph{Dihomotopy.} %The standard interval $[0,1]$ can be considered as a preordered space where the preorder is the discrete preorder. Let $f\colon X \to Y$ be a monotone map. Then $f$ is a \emph{dihomotopy equivalence} if there exists $g \colon Y \to X$ monotone map such that $f$ and $g$ are homotopic.
Let \(I\) denote the unit interval with the discrete preorder. Two monotone maps \(f, g\colon X \to Y\) are called \emph{dihomotopic} if there exists a \emph{dihomotopy} between them, i.e., a monotone map \(H\colon X \times I \to Y\) such that, for all \(x \in X\), \(H(x,0) = f(x)\) and \(H(x,1) = g(x)\). Thus, two monotone maps are dihomotopic if and only if they are homotopic through monotone maps. A monotone map \(f\colon X \to Y\) is called a \emph{dihomotopy equivalence} if it admits a \emph{dihomotopy inverse}, meaning a monotone map \(g\colon Y \to X\) such that the composites \(g\circ f\) and \(f\circ g\) are dihomotopic to the identities of \(X\) and \(Y\), respectively.

\paragraph{Pospaces.} 
%In some papers, a pospace is defined as a topological space with a closed partial order. Here we will consider that pospaces are just topological spaces with a partial order. 
A \emph{partially ordered space} or \emph{pospace} is a topological space \(X\) equipped with a partial order, which some authors require to be closed as a subspace of \(X \times X\), see, e.g., \cite{FajstrupGR}. Thus, by definition, pospaces are preordered spaces. A standard example is the ordered circle $O^1$, which is the circle $S^1$ with both the left and the right semicircles totally ordered from the bottom to the top, see Figure \ref{figO1}.
 		\begin{figure} \label{figO1}
 			\centering
 			\begin{tikzpicture}
 			\draw[thick,->,>=stealth] (0,-2) arc (-90:86:1) node[midway, right]{\scalebox{0.75}{}};
 			\draw[thick,->,>=stealth] (0,-2) arc (-90:-266:1) node[midway, left]{\scalebox{0.75}{}};
 			
 			\node[state,minimum size=0pt,inner sep =1.5pt,fill=white] (p_0) at (0,0)  {};
 			\node[state,minimum size=0pt,inner sep =1.5pt,fill=white] (p_1) at (0,-2)  {};
 			
 			\path
 			(p_1) node[below]{\scalebox{0.95}{}}
 			(p_0) node[above]{\scalebox{0.95}{}};
 			\end{tikzpicture}
 			\caption{Ordered circle $O^1$}
 		\end{figure}
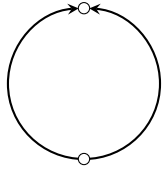
   A class of examples that are frequently considered are the \emph{Euclidean pospaces}, i.e., subspaces of \(\R^n\) with the natural partial order. Two such Euclidean pospaces are shown in Figure \ref{figeuclid}. 

\paragraph{D-spaces.} A \emph{d-space} is a topological space \(X\) together with a set \(dX\) of \emph{directed paths} that contains all constant paths and is closed under concatenation and (not necessarily strictly) increasing reparametrization \cite{GrandisBook}. A d-space \(X\) is naturally a preordered space with respect to the \emph{reachability preorder} given by $$x\preceq_X y \iff \exists\, \alpha \in dX \quad \alpha (0)=x,\, \alpha(1)=y.$$ 
The category of d-spaces permits one to distinguish the \emph{directed circle} \(\vec S^1\), where the directed paths are the counterclockwise oriented paths, from the \emph{natural circle}, where all paths are directed. However, in both cases, the reachability preorder is the indiscrete preorder, and so as preordered spaces, the two circles are the same. We will refer to the circle with the indiscrete preorder as the directed circle.

%In the category of d-spaces, the directed circle is a d-space where the paths are the counterclockwise paths while in the ordered circle the paths are paths with  the second component increasing. Note that in the category of preordered spaces these two topological objects have the same preorder (the indiscrete preorder). So, here the circle with the indiscrete preorder will be called \emph{directed circle}.

\paragraph{Streams.} A \emph{stream} is a topological space equipped with a \emph{circulation}, meaning a function assigning to each open set a preorder such that the preorder of a union of open sets is generated by the preorders of these open sets \cite{KrishnanConvCat}. In particular, considering the preorder assigned to itself, a stream is a preordered space.

%A \emph{stream} is a topological space with a family of preorder relations defined on each open set of $X$ such that two conditions hold: first the union of open sets of $X$ has as preorder the join of the respective preorders and second, the preorder of a set that contains an open set $U$ is the induced preorder by the preorder of $U$.

\paragraph{Precubical and cubical sets.} A \emph{precubical set} is a graded set $X = (X_n)_{n\geq 0}$ with face maps $d_i^k \colon X_n \to X_{n-1}$ $(i\in \{1,\dots,n \}, k\in \{ 0,1\})$ that satisfy the compatibility condition $d_i^k d_j^l = d_{j-1}^l d_i^k$ $(i<j)$. %The elements of a precubical set are called \emph{cubes}. 
A \emph{cubical set} is a precubical set $X$ with degeneracy maps $e_i\colon X_{n-1} \to X_n$ $(n>0,\, i \in \{1, \dots, n\})$ %such that the compatibility relations $e_j e_i = e_{i+1} e_j, \quad j\leq i$ guarantee that faces of different dimensions intersect as expected.
satisfying the additional relations $e_je_i = e_ie_{j-1}$ $(i<j)$
and
\[d^k_ie_j = \left\{\begin{array}{ll}
	e_{j-1}d^k_i &\;(i<j),\\
	id &\;(i=j),\\
	e_jd^k_{i-1} &\;(i>j).	
\end{array}
\right.\]
Both precubical and cubical sets can be realized geometrically as d-spaces and hence as preordered spaces, see, e.g., \cite{FGHMR, GrandisBook}. 

%The geometric realization of a (pre)cubical set is a d-space, where $d|X|$ is the set of paths such that, when restricted to each cube, they are increasing. Therefore, $|X|$ is a preordered space.

\section{Directional graded vector spaces} 

By definition, the homology digraph of a preordered space will be a directional graded vector space, i.e., a graded vector space equipped with a bilinear relation. In this section, we define these algebraic concepts and prove fundamental results about them. Throughout this paper we work over a field  \(\mathbb K\). Given a graded vector space \(V = (V_k)_{k\in \Z}\), we say that an element \(v\in V_k\) is an \emph{element of \(V\) of degree \(k\)} and write \(v\in V\) and \(\deg(v) = k\). 

\paragraph{Bilinear relations.} A \emph{relation} on a graded vector space \(V = (V_k)_{k\in \Z}\) is a relation on the disjoint union \(\coprod _{k \in \Z} V_k\). A relation \(\searrow\) on a graded vector space \(V\) is said to be \emph{bilinear} if there exists a graded vector subspace \(R \subseteq V \otimes V\) such that
\[\forall  v, w \in V\quad  v \searrow w \iff v\otimes w \in R.\]
Such a graded vector subspace will be called a \emph{defining vector space} for \(\searrow\). The smallest defining vector space for \(\searrow\), i.e., the intersection of all defining vector spaces for \(\searrow\), will be denoted by \(R^\searrow\). The following properties follow immediately from the definition:

\begin{prop} \label{bilin}
	Let \(\searrow\) be a bilinear relation on a graded vector space \(V\). Then for all \(v,v', w, w'\in V\) and \(\lambda, \mu \in \mathbb{K}\),
	\begin{enumerate}
		\item \(v = 0\) or \(w=0 \implies v \searrow w\);
		\item \(v \searrow w \implies \lambda v \searrow \mu w\);
		\item \(\deg(v) = \deg(v'), v\searrow w, v'\searrow w \implies v + v' \searrow w\);
		\item \(\deg(w) = \deg(w'), v\searrow w, v\searrow w' \implies v \searrow  w + w'\).
	\end{enumerate}
\end{prop}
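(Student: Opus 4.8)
The plan is to unwind the definition of a bilinear relation and reduce every assertion to an elementary statement about the defining vector space $R$. Fix a defining vector space $R \subseteq V \otimes V$ for $\searrow$, so that $v \searrow w$ holds precisely when $v \otimes w \in R$. All four parts will then follow from the fact that $R$ is a graded vector subspace, together with the standard bilinearity of the tensor product $\otimes\colon V \times V \to V \otimes V$.

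For part (1), the key observation is that the tensor product is bilinear, so $0 \otimes w = 0$ and $v \otimes 0 = 0$ for all $v, w$; since $R$ is a subspace it contains $0$, whence $0 \otimes w \in R$ and $v \otimes 0 \in R$, giving $v \searrow w$ in either case. For part (2), from $v \searrow w$ we have $v \otimes w \in R$, and because $R$ is closed under scalar multiplication and $(\lambda v) \otimes (\mu w) = \lambda\mu (v \otimes w)$, we get $(\lambda v)\otimes(\mu w) \in R$, i.e. $\lambda v \searrow \mu w$. Parts (3) and (4) are symmetric; for (3), the degree hypothesis $\deg(v) = \deg(v')$ guarantees that $v + v'$ is a genuine homogeneous element of $V$ to which the relation applies, and the distributivity $(v + v')\otimes w = v\otimes w + v'\otimes w$ together with closure of $R$ under addition yields $(v+v')\otimes w \in R$; part (4) follows identically using $v \otimes (w + w') = v\otimes w + v \otimes w'$.

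There is no real obstacle here, but the one point deserving care is the role of the degree hypotheses in (3) and (4). I expect the subtlety to be purely bookkeeping: the relation $\searrow$ is defined on the disjoint union $\coprod_{k} V_k$, so it only makes sense to speak of $v + v' \searrow w$ when $v + v'$ lands in a single homogeneous component, which is exactly what $\deg(v) = \deg(v')$ ensures; the same condition is what makes $v \otimes w$ and $v' \otimes w$ lie in the same graded piece of $V \otimes V$ so that their sum is again homogeneous and the membership in $R$ can be tested componentwise. No feature of $\searrow$ beyond the existence of \emph{some} defining vector space is needed, so the proof works verbatim with $R^{\searrow}$ in place of an arbitrary $R$.
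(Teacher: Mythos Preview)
Your proof is correct and matches the paper's approach exactly: the paper simply states that the properties ``follow immediately from the definition'' without giving further detail, and your argument is precisely the immediate verification it has in mind. Your remark about the degree hypotheses in (3) and (4) ensuring that $v+v'$ (resp.\ $w+w'$) is a homogeneous element to which $\searrow$ applies is the only point that requires any care, and you handle it correctly.
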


\begin{ex}
	Perhaps surprisingly, a relation that satisfies these properties does not necessarily have to be bilinear. Indeed, suppose that $\mathbb{K} = \Z_2$, and let $V$ be the graded vector space concentrated in degree $0$ defined by $V_0 = \Z_2^3$. Write $e_1 = (1,0,0)$, $e_2=(0,1,0)$, and $e_3 = (0,0,1)$. Define the relation $\searrow$ by
	\begin{enumerate}[label=(\roman*)]
		\item $v \searrow w$  for all  $v,w\in V$ such that \(v= 0\) or \(w=0\);
		\item $e_1\searrow e_2+e_3,\;\; e_2 \searrow e_2,\;\; e_3 \searrow e_1+e_2,\;\; e_1+e_2\searrow e_1,\;\; e_2+e_3 \searrow e_3$.		
	\end{enumerate}
	Then, by (i), $\searrow$ has properties 1 and 2 of Proposition \ref{bilin}. In order to check property 3, let $v,v',w\in V$ such that \(\deg(v) = \deg(v')\), $v \searrow w$, and $v' \searrow w$. We have to show that $v+v' \searrow w$. If $v=v'$, then this holds by condition (i). So we may suppose that $v$ and $v'$ are different and, in particular, that at least one of them is nonzero. By condition (i), we may suppose that $w \not= 0$. Then $w$ is one of the elements $e_2+e_3$, $e_2$, $e_1+e_2$, $e_1$, and $e_3$. In each case, there exists precisely one nonzero element $x$ such that $x \searrow w$. Hence one of the elements $v$ and $v'$ is this $x$ and the other is $0$. Thus  $v+v' = x \searrow w$. An analogous argument shows that $\searrow$ has property 4 of Proposition \ref{bilin}. Now consider the element $v = e_1+e_2+e_3$. Since 
	\[v\otimes v = e_1\otimes (e_2+e_3) + e_2 \otimes e_2 +  e_3 \otimes (e_1+e_2) + (e_1+e_2)\otimes e_1  +  (e_2+e_3) \otimes e_3,\]
	by (ii), we would have $v\searrow v$ if $\searrow$ was bilinear. Since, however, $v\not \searrow v$, we conclude that $\searrow$ is not bilinear.
\end{ex}

We omit the easy proof of the following fact:

\begin{prop} \label{intersect}
	 The intersection of a nonempty family of bilinear relations on a graded vector space is bilinear.
\end{prop}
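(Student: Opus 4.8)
The plan is to take, for each member of the family, a defining vector space and show that the intersection of these defining vector spaces defines the intersection relation. Concretely, write the family as \((\searrow_i)_{i\in J}\) with \(J \neq \emptyset\), and let \(\searrow\) denote the intersection relation, so that \(v \searrow w\) holds if and only if \(v \searrow_i w\) holds for every \(i \in J\). For each \(i\), choose a defining vector space \(R_i \subseteq V \otimes V\) for \(\searrow_i\) (for definiteness one may take \(R_i = R^{\searrow_i}\), though any choice works). The candidate defining vector space for \(\searrow\) is \(R = \bigcap_{i\in J} R_i\).

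First I would observe that \(R\) is again a graded vector subspace of \(V \otimes V\): an arbitrary intersection of vector subspaces is a vector subspace, and since each \(R_i\) is graded, so is their intersection, the homogeneous component of \(R\) in each degree \(k\) being the intersection of the degree-\(k\) components of the \(R_i\). Thus \(R\) is a legitimate candidate.

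The core step is then to verify the equivalence \(v \searrow w \iff v \otimes w \in R\) for all \(v, w \in V\). This is a direct chain of equivalences: \(v \otimes w \in R\) holds if and only if \(v \otimes w \in R_i\) for every \(i \in J\), which by the choice of \(R_i\) as a defining vector space for \(\searrow_i\) holds if and only if \(v \searrow_i w\) for every \(i\), which is exactly the definition of \(v \searrow w\). Hence \(R\) is a defining vector space for \(\searrow\), and \(\searrow\) is therefore bilinear.

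I do not expect any genuine obstacle here; the result follows formally once the candidate \(R = \bigcap_{i\in J} R_i\) is written down, which is why the authors call the proof easy. The only minor points requiring care are confirming that the intersection of graded subspaces remains graded and using the hypothesis that the family is nonempty, so that the intersection is taken over a nonempty index set; for an empty family the intersection relation would instead be the full relation on \(V\), corresponding to \(R = V \otimes V\).
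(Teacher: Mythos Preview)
Your argument is correct and is exactly the straightforward verification the authors have in mind; they omit the proof entirely, calling it easy, and your candidate \(R = \bigcap_{i\in J} R_i\) together with the chain of equivalences is the natural way to fill it in. One small remark: as your own final sentence implicitly shows, the empty-family case would still yield a bilinear relation (the full relation, defined by \(R = V\otimes V\)), so the nonemptiness hypothesis is more a matter of avoiding conventions about empty intersections than a genuine necessity.
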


%\emph{Question}: Is it true that \(R^{\cap\!\searrow_\alpha} = \bigcap \limits_{\alpha \in A} R^{\searrow_\alpha}\)?

Let \(\nearrow\) be an arbitrary relation on a graded vector space \(V\). The smallest bilinear relation \(\searrow\) satisfying \(\nearrow \subseteq \searrow\), which exists by Proposition \ref{intersect},  
%\[\forall \, i,j\in \Z,\, v \in V_i,\, w \in V_j\quad  v \nearrow w \implies v \searrow w,\]
will be called the \emph{bilinear relation generated by}  \(\nearrow\). 

\begin{prop} \label{1.3}
	Let \(\searrow\) be the bilinear relation generated by a relation \(\nearrow\) on a graded vector space \(V\). Then 
	\(R^\searrow = \langle x\otimes y \mid  x \nearrow y \rangle\).
\end{prop}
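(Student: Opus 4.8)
The plan is to prove the statement by establishing the two inclusions separately, writing $S := \langle x\otimes y \mid x\nearrow y\rangle$ for the graded subspace on the right-hand side. Note first that $S$ really is a \emph{graded} subspace of $V\otimes V$: since a relation only compares homogeneous elements, each generator $x\otimes y$ is homogeneous of degree $\deg(x)+\deg(y)$. Two minimality properties are in play, and the proof is essentially an exercise in coordinating them: on the one hand $\searrow$ is the smallest bilinear relation containing $\nearrow$, and on the other hand $R^\searrow$ is the smallest defining vector space for $\searrow$.

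For the inclusion $S\subseteq R^\searrow$ I would argue as follows. If $x\nearrow y$, then $x\searrow y$, because $\searrow$ contains $\nearrow$. Every defining vector space $R$ for $\searrow$ therefore contains $x\otimes y$, and hence so does their intersection $R^\searrow$. As $R^\searrow$ is a subspace containing every generator of $S$, it contains $S$.

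For the reverse inclusion the natural strategy is to show that $S$ is \emph{itself} a defining vector space for $\searrow$; the minimality of $R^\searrow$ among such spaces then forces $R^\searrow\subseteq S$. To verify the defining condition $v\searrow w \iff v\otimes w\in S$, the implication $\Leftarrow$ is immediate from the inclusion $S\subseteq R^\searrow$ just proved, since $R^\searrow$ defines $\searrow$. For the implication $\Rightarrow$ I would introduce the auxiliary relation $\searrow_S$ given by $v\searrow_S w \iff v\otimes w\in S$; this relation is bilinear by construction (with defining vector space $S$), and it contains $\nearrow$ because $x\nearrow y$ gives $x\otimes y\in S$. By minimality of $\searrow$ we obtain $\searrow\,\subseteq\,\searrow_S$, which is exactly the desired implication $v\searrow w \Rightarrow v\otimes w\in S$.

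I expect this last implication to be the crux. It is precisely the point where one must use that $\searrow$ is the \emph{smallest} bilinear relation extending $\nearrow$, so that $S$ captures all of $\searrow$ and not merely the pairs coming directly from $\nearrow$. The remaining verifications (that $S$ is graded, and that an intersection of defining vector spaces still ``defines'' $\searrow$ in each of the two directions used above) are routine. Combining the two inclusions yields $R^\searrow = S$.
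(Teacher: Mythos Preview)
Your proof is correct and follows essentially the same approach as the paper's: both establish $S\subseteq R^\searrow$ from $\nearrow\,\subseteq\,\searrow$, and then show that the bilinear relation defined by $S$ (your $\searrow_S$, the paper's $\uparrow$) coincides with $\searrow$, so that $S$ is a defining vector space and hence contains $R^\searrow$. The only cosmetic difference is that for the implication $v\otimes w\in S\Rightarrow v\searrow w$ you invoke the already-proved inclusion $S\subseteq R^\searrow$, while the paper writes out $v\otimes w$ as a combination of generators and pushes them into $R^\searrow$ directly; these amount to the same thing.
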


\begin{proof}
	Write \(R = \langle x\otimes y \mid  x \nearrow y \rangle\). Since \(\nearrow \subseteq \searrow\), we have \(R \subseteq R^\searrow\). For the reverse inclusion, it is enough to show that \(\searrow\) coincides with the bilinear relation \(\uparrow\) defined by \(R\). Since \(\nearrow \subseteq \uparrow\), we have \(\searrow \subseteq \uparrow\). Consider elements \(v, w \in V\) such that \(v \uparrow w\). Then 
	\[v \otimes w = \sum_r \lambda_r x_r\otimes y_r, \quad \lambda_r\in \K,\; x_r \nearrow y_r.\]
	Since \(x_r \searrow y_r\), we have \(x_r\otimes y_r \in R^\searrow\) and hence \(v\otimes w\in R^\searrow\). Thus, \(v\searrow w\). 
\end{proof}

\iffalse 
\begin{rem}
	Let \(\searrow\) be the bilinear relation generated by a relation \(\nearrow\) on a graded vector space \(V\), and consider \(v \in V_i\) and \(w\in V_j\) such that \(v\searrow w\). Then \(v\otimes w \in R^{\searrow}_{i+j}\cap (V_i\otimes V_j)\). By Proposition \ref{1.3}, it follows that  
	\[v\otimes w = \sum_r \lambda_rx_r\otimes y_r\]
	where \(\lambda_r \in \mathbb{K}\), \(x_r \in V_i\), \(y_r \in V_j\), and \(x_r \nearrow y_r\). 
\end{rem}
\fi

\paragraph{Directional graded vector spaces.} A \emph{directional graded vector space} is a graded vector space \(V\) equipped with a bilinear relation \(\searrow_V\). We refer to  \(\searrow_V\) as the \emph{pointing relation} of \(V\), and if \(v \searrow_V w\), we say that \(v\) \emph{points to} \(w\). Directional graded vector spaces form a category, in which morphisms are morphisms of graded vector spaces that are compatible with the pointing relations. The following lemma is useful for checking that a map is a morphism of directional graded vector spaces:

\begin{lem} \label{1.5}
	Let \(V\) and \(W\) be directional graded vector spaces, and suppose that the pointing relation \(\searrow_V\) is generated by a relation \(\nearrow_V\). Let \(f \colon V \to W\) be a morphism of graded vector spaces such that 
	\[\forall\,  v, v' \in V \quad v \nearrow_V v' \implies f(v) \searrow_W f(v').\]
	Then \(f\) is a morphism of directional graded vector spaces. 
\end{lem}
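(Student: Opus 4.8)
The plan is to verify the only nontrivial requirement for $f$ to be a morphism of directional graded vector spaces. Since $f$ is assumed to be a morphism of graded vector spaces, it remains to check compatibility with the pointing relations, i.e., that
\[\forall\, v, v' \in V \quad v \searrow_V v' \implies f(v) \searrow_W f(v').\]
So I would fix $v, v' \in V$ with $v \searrow_V v'$ and aim to deduce $f(v) \searrow_W f(v')$.

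The key step is to pass from the pointing relations to their smallest defining vector spaces $R^{\searrow_V}$ and $R^{\searrow_W}$. By the definition of a bilinear relation, applied to the smallest defining vector space, the statement $v \searrow_V v'$ is equivalent to $v \otimes v' \in R^{\searrow_V}$, and likewise $f(v) \searrow_W f(v')$ is equivalent to $f(v) \otimes f(v') \in R^{\searrow_W}$. Thus the goal becomes the membership statement $f(v) \otimes f(v') \in R^{\searrow_W}$. Since $\searrow_V$ is generated by $\nearrow_V$, Proposition \ref{1.3} gives $R^{\searrow_V} = \langle x \otimes y \mid x \nearrow_V y\rangle$, so I can write $v \otimes v'$ as a finite linear combination
\[v \otimes v' = \sum_r \lambda_r\, x_r \otimes y_r, \qquad \lambda_r \in \K, \; x_r \nearrow_V y_r.\]

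Now I would apply the graded linear map $f \otimes f \colon V \otimes V \to W \otimes W$ to both sides. Using $(f\otimes f)(x_r \otimes y_r) = f(x_r)\otimes f(y_r)$, this yields
\[f(v) \otimes f(v') = \sum_r \lambda_r\, f(x_r) \otimes f(y_r).\]
By the hypothesis on $f$, each relation $x_r \nearrow_V y_r$ gives $f(x_r) \searrow_W f(y_r)$, that is, $f(x_r) \otimes f(y_r) \in R^{\searrow_W}$. Since $R^{\searrow_W}$ is a vector subspace, it is closed under the linear combination on the right, so $f(v)\otimes f(v') \in R^{\searrow_W}$, which is exactly $f(v) \searrow_W f(v')$.

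I expect no serious obstacle: the argument is essentially a transport of the generating decomposition of Proposition \ref{1.3} along $f \otimes f$. The only point to keep in mind is that the decomposition of $v \otimes v'$ need not be homogeneous, and the factors $x_r, y_r$ need not match in degree; this is harmless, since we never use degrees and rely only on $R^{\searrow_W}$ being a subspace closed under arbitrary linear combinations.
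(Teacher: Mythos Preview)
Your proposal is correct and follows essentially the same approach as the paper: both decompose $v\otimes v'$ via Proposition~\ref{1.3}, push the decomposition through $f\otimes f$, and use the hypothesis together with the fact that $R^{\searrow_W}$ is a subspace to conclude. Your write-up is slightly more explicit about the role of $f\otimes f$ and the subspace property, but the argument is the same.
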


\begin{proof}
	Let $v, v'\in V$ be such that $v\searrow_V v'$. Then, by Proposition \ref{1.3}, $$v\otimes v' = \sum_r \lambda_r x_r \otimes y_r, \quad \lambda_r \in \K, \;  x_r \nearrow_V y_r.$$ By our hypothesis, $f(x_r) \searrow_W f(y_r)$, i.e., $f(x_r)\otimes f(y_r) \in R^{\searrow_W}$. Hence
	\begin{align*}
		f(v) \otimes f(v')  
		&=  \sum_r \lambda_r f(x_r) \otimes f(y_r) \in R^{\searrow_W}
	\end{align*}	
	and therefore $f(v)\searrow_W f(v')$. 		  
\end{proof}

\paragraph{Direct sum.}
	The direct sum of a family \((V^i)_{i \in \I}\) of directional graded vector spaces is a directional graded vector space, with  pointing relation \(\searrow_{\bigoplus _{i\in \I} V^i}\) defined by the graded vector space \[\bigoplus _{i \in \I} R^{\searrow_{V^i}} \subseteq \bigoplus _{i \in \I} V^i\otimes V^i \subseteq \bigoplus _{i \in \I} V^i\otimes \bigoplus _{i \in \I} V^i.\]

	%The direct sum of two directional graded vector spaces \({V }\) and \(W\) is a directional graded vector space, with  pointing relation \(\searrow_{V\oplus W}\) defined by the graded vector space \[R^{\searrow_V} \oplus R^{\searrow_W} \subseteq V\otimes V \oplus W\otimes W \subseteq (V \oplus W)\otimes (V \oplus W).\]

\begin{prop} \label{oplus}
	Let \((V^i)_{i\in \I}\) be a family of directional graded vector spaces, and let \({\alpha, \beta \in \bigoplus_{i\in \I} V^i}\) be nonzero elements. Then \({\alpha \searrow_{\bigoplus_{i\in \I} V^i} \beta}\) if and only if there exists an index \(j\in \I\) such that \(\alpha, \beta \in V^j\) and \(\alpha \searrow_{V^j} \beta\). 	
\end{prop}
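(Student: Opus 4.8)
The plan is to unwind the definition of the pointing relation on the direct sum directly in terms of its defining vector space. By construction, \(\searrow_{\bigoplus_{i\in\I}V^i}\) is the bilinear relation whose defining vector space is \(R = \bigoplus_{i\in\I} R^{\searrow_{V^i}}\), sitting inside the ``diagonal'' subspace \(\bigoplus_{i\in\I}V^i\otimes V^i\) of the full tensor product \(\big(\bigoplus_{i\in\I}V^i\big)\otimes\big(\bigoplus_{i\in\I}V^i\big)\). Hence \(\alpha\searrow_{\bigoplus_{i\in\I}V^i}\beta\) is equivalent to \(\alpha\otimes\beta\in R\), and the whole argument becomes an analysis of when this membership holds.

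The ``if'' direction is immediate: if \(\alpha,\beta\in V^j\) and \(\alpha\searrow_{V^j}\beta\), then \(\alpha\otimes\beta\in R^{\searrow_{V^j}}\), which is one of the summands of \(R\), so \(\alpha\otimes\beta\in R\) and thus \(\alpha\searrow_{\bigoplus_{i\in\I}V^i}\beta\).

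For the ``only if'' direction, I would write \(\alpha=\sum_i\alpha_i\) and \(\beta=\sum_i\beta_i\) with \(\alpha_i,\beta_i\in V^i\) (almost all zero), so that, by bilinearity, the component of \(\alpha\otimes\beta\) in the summand \(V^i\otimes V^k\) of the decomposition \(\big(\bigoplus_i V^i\big)\otimes\big(\bigoplus_i V^i\big)=\bigoplus_{i,k}V^i\otimes V^k\) is exactly \(\alpha_i\otimes\beta_k\). Since \(R\) lies in the diagonal part \(\bigoplus_i V^i\otimes V^i\), the membership \(\alpha\otimes\beta\in R\) forces \(\alpha_i\otimes\beta_k=0\) for all \(i\neq k\). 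Now I invoke the key algebraic fact that, over a field, a pure tensor \(a\otimes b\) vanishes if and only if \(a=0\) or \(b=0\); it follows that for every pair \(i\neq k\) we have \(\alpha_i=0\) or \(\beta_k=0\). Because \(\alpha\) and \(\beta\) are nonzero, their supports \(\{i:\alpha_i\neq0\}\) and \(\{k:\beta_k\neq0\}\) are both nonempty, and the preceding condition says these two supports cannot contain distinct indices; hence both equal a single common index \(j\). Thus \(\alpha=\alpha_j\in V^j\) and \(\beta=\beta_j\in V^j\), and \(\alpha\otimes\beta\) lives in the \(j\)-th diagonal summand. Finally, since the remaining summands of \(R\) contribute only \(0\) and the \(j\)-th summand is \(R^{\searrow_{V^j}}\), the condition \(\alpha\otimes\beta\in R\) reduces to \(\alpha\otimes\beta\in R^{\searrow_{V^j}}\), that is, \(\alpha\searrow_{V^j}\beta\).

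The main obstacle is the middle step that pins down the support: translating ``\(\alpha\otimes\beta\) lands in the diagonal subspace'' into ``\(\alpha\) and \(\beta\) are concentrated in one common summand.'' This is precisely where the nonzero hypothesis is indispensable — for \(\alpha=0\) the left-hand relation always holds by property 1 of Proposition \ref{bilin}, while \(\beta\) may spread over several summands — and where one needs the nonvanishing criterion for pure tensors over a field. Everything else is a routine unwinding of the definition of the defining vector space \(R\) together with the equivalence \(v\searrow w\iff v\otimes w\in R^{\searrow}\).
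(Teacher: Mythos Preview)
Your proof is correct and follows essentially the same approach as the paper: both arguments use the decomposition \(\big(\bigoplus_i V^i\big)\otimes\big(\bigoplus_i V^i\big)=\bigoplus_{i,k}V^i\otimes V^k\), observe that membership in the diagonal part forces the off-diagonal pure tensors to vanish, and then invoke that over a field \(a\otimes b=0\) implies \(a=0\) or \(b=0\) to conclude that \(\alpha\) and \(\beta\) are concentrated in a single common summand. The only cosmetic difference is that the paper indexes by the nonzero components from the outset (writing \(\alpha=\sum_r v^{i_r}\), \(\beta=\sum_s w^{j_s}\)) and immediately deduces \(m=n=1\), whereas you phrase the same step in terms of supports.
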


\begin{proof}
	If there exists an index \(j\in \I\) such that \(\alpha, \beta  \in V^j\) and \(\alpha \searrow_{V^j} \beta\), then \(\alpha \otimes \beta \in R^{\searrow_{V^j}} \subseteq \bigoplus _{i \in \I} R^{\searrow_{V^i}}\) and therefore \({\alpha \searrow_{\bigoplus_{i\in \I} V^i} \beta}\).
	
	Suppose now that \({\alpha \searrow_{\bigoplus_{i\in \I} V^i} \beta}\). Let \(v^{i_r} \in V^{i_r}\) \((r=1, \dots, m)\) and \(w^{j_s} \in V^{j_s}\) \((s = 1, \dots ,n)\) be the unique nonzero elements such that \(\alpha = \sum_r v^{i_r}\) and \(\beta = \sum_s w^{j_s}\). Then 
	\[\alpha \otimes \beta = \sum_{r,s} v^{i_r}\otimes w^{j_s} \in \bigoplus _{i \in \I} R^{\searrow_{V^i}} \subseteq \bigoplus _{i \in \I} V^i\otimes V^i.\]
	Since we are working over a field, \(v^{i_r}\otimes w^{j_s} \not= 0\) for all \(r\) and \(s\). It follows that \(m = n = 1\) and that \(i_1 = j_1\). Moreover, \(\alpha \otimes \beta = v^{i_1}\otimes w^{i_1} \in R^{\searrow_{V^{i_1}}}\). Thus, 
	\(\alpha \searrow_{V^{i_1}} \beta\).
\end{proof}

As an immediate consequence, we have the following fact:

\begin{cor} \label{coprod}
	The coproduct in the category of directional graded vector spaces is given by the direct sum.
\end{cor}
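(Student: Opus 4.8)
The plan is to show that the direct sum of a family of directional graded vector spaces, as defined immediately before the statement, satisfies the universal property of the coproduct in the category of directional graded vector spaces. Since the underlying graded vector space of the direct sum is the ordinary direct sum of graded vector spaces, and the latter is the coproduct in the category of graded vector spaces, the canonical inclusions $\iota_j \colon V^j \to \bigoplus_{i\in\I} V^i$ are morphisms of graded vector spaces, and any family of morphisms of graded vector spaces $(f^j \colon V^j \to W)_{j\in\I}$ factors uniquely through a morphism of graded vector spaces $f \colon \bigoplus_{i\in\I} V^i \to W$. The entire content to verify is therefore that these graded-vector-space morphisms respect the pointing relations, after which uniqueness is automatic from the uniqueness at the level of graded vector spaces.

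First I would check that each inclusion $\iota_j$ is a morphism of directional graded vector spaces. This is immediate from Proposition \ref{oplus}: if $\alpha \searrow_{V^j} \beta$ with $\alpha,\beta$ nonzero, then $\alpha \searrow_{\bigoplus_{i\in\I} V^i} \beta$, and the cases where $\alpha$ or $\beta$ is zero are handled by property 1 of Proposition \ref{bilin}, so $\iota_j$ is compatible with the pointing relations. Next I would take an arbitrary directional graded vector space $W$ together with morphisms $f^j \colon V^j \to W$ and let $f$ be the induced graded-vector-space map satisfying $f\circ\iota_j = f^j$. To see that $f$ is a morphism of directional graded vector spaces, suppose $\alpha \searrow_{\bigoplus_{i\in\I} V^i} \beta$. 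If $\alpha = 0$ or $\beta = 0$ then $f(\alpha) = 0$ or $f(\beta) = 0$, so $f(\alpha) \searrow_W f(\beta)$ by property 1 of Proposition \ref{bilin}. Otherwise, by Proposition \ref{oplus} there is an index $j$ with $\alpha,\beta \in V^j$ and $\alpha \searrow_{V^j} \beta$; since $f^j$ is a morphism, $f^j(\alpha) \searrow_W f^j(\beta)$, and because $f(\alpha) = f^j(\alpha)$ and $f(\beta) = f^j(\beta)$ we conclude $f(\alpha) \searrow_W f(\beta)$.

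I do not anticipate a serious obstacle here, as the proposition is labelled an immediate consequence of Proposition \ref{oplus}; the only point requiring care is the handling of the zero cases, since Proposition \ref{oplus} is stated only for nonzero $\alpha,\beta$, and one must fall back on property 1 of Proposition \ref{bilin} to cover those. Uniqueness of the factoring morphism follows because any competing morphism of directional graded vector spaces is in particular a morphism of graded vector spaces inducing the same maps $f^j$, and such a map is unique by the universal property of the direct sum of graded vector spaces. This completes the verification that the direct sum, equipped with the inclusions $\iota_j$, is the coproduct.
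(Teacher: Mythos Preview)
Your proposal is correct and follows exactly the approach the paper has in mind: the corollary is stated as an immediate consequence of Proposition \ref{oplus}, and your argument---checking that the inclusions preserve the pointing relation and that the induced map does so via Proposition \ref{oplus}, with the zero cases handled by Proposition \ref{bilin}---is precisely the intended verification of the universal property.
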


\paragraph{Tensor product.} The tensor product of two directional graded vector spaces \(V\) and \(W\) is a directional graded vector space, with pointing relation 
\(\searrow_{V\otimes W}\) defined by the graded vector subspace %\(R \subseteq V\otimes W \otimes V \otimes W\) given by
\[R = \langle v\otimes w \otimes v' \otimes w' \mid v \searrow_V v', w \searrow_W w'\rangle\]
of \(V\otimes W \otimes V \otimes W\).

\begin{prop} \label{tensor}
	Let \(V\) and \(W\) be directional graded vector spaces, with pointing relations \(\searrow_V\) and \(\searrow_W\) generated by relations \(\nearrow_V\)  and \(\nearrow_W\), respectively. Then the smallest defining vector space for the pointing relation of  \(V\otimes W\) is given by 	
	\[R^{\searrow_{V\otimes W}} = \langle v\otimes w \otimes v' \otimes w' \mid v \nearrow_V v', w \nearrow_W w'\rangle.\]
\end{prop}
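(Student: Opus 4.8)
The plan is to squeeze $R^{\searrow_{V\otimes W}}$ between two subspaces that will both turn out to equal the claimed space. Write
\[R = \langle v\otimes w \otimes v' \otimes w' \mid v \searrow_V v',\ w \searrow_W w'\rangle\]
for the defining vector space used to define $\searrow_{V\otimes W}$, and write $S = \langle v\otimes w \otimes v' \otimes w' \mid v \nearrow_V v',\ w \nearrow_W w'\rangle$ for the space on the right-hand side of the statement. I would establish the chain $S \subseteq R^{\searrow_{V\otimes W}} \subseteq R \subseteq S$, which forces all three to coincide and proves the proposition.

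Two of these inclusions are immediate. Since $R$ is a defining vector space for $\searrow_{V\otimes W}$ and $R^{\searrow_{V\otimes W}}$ is the intersection of all defining vector spaces, we get $R^{\searrow_{V\otimes W}}\subseteq R$ at once. For $S \subseteq R^{\searrow_{V\otimes W}}$, I would argue on generators: if $v\otimes w\otimes v'\otimes w'$ is a generator of $S$, then $v\nearrow_V v'$ and $w\nearrow_W w'$, hence $v\searrow_V v'$ and $w\searrow_W w'$ because $\searrow_V$ and $\searrow_W$ are generated by $\nearrow_V$ and $\nearrow_W$. Thus the element lies in $R$, which means $v\otimes w\searrow_{V\otimes W} v'\otimes w'$; as $R^{\searrow_{V\otimes W}}$ is itself a defining vector space, the element $(v\otimes w)\otimes(v'\otimes w')$ belongs to $R^{\searrow_{V\otimes W}}$. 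Since such elements generate $S$, this gives $S\subseteq R^{\searrow_{V\otimes W}}$.

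The remaining inclusion $R\subseteq S$ is the heart of the matter, and here I would use Proposition \ref{1.3}. For a generator $v\otimes w\otimes v'\otimes w'$ of $R$, the relation $v\searrow_V v'$ gives $v\otimes v'\in R^{\searrow_V} = \langle x\otimes y\mid x\nearrow_V y\rangle$, so $v\otimes v' = \sum_r\lambda_r\, x_r\otimes y_r$ with $x_r\nearrow_V y_r$; similarly $w\otimes w' = \sum_s\mu_s\, p_s\otimes q_s$ with $p_s\nearrow_W q_s$. To transport these expansions into $V\otimes W\otimes V\otimes W$, I would introduce the canonical middle-swap isomorphism $\sigma\colon V\otimes W\otimes V\otimes W \to (V\otimes V)\otimes(W\otimes W)$ determined by $v\otimes w\otimes v'\otimes w'\mapsto (v\otimes v')\otimes(w\otimes w')$. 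Under $\sigma$ the generator becomes $(v\otimes v')\otimes(w\otimes w') = \sum_{r,s}\lambda_r\mu_s\,(x_r\otimes y_r)\otimes(p_s\otimes q_s)$, and applying $\sigma^{-1}$ expresses $v\otimes w\otimes v'\otimes w' = \sum_{r,s}\lambda_r\mu_s\, x_r\otimes p_s\otimes y_r\otimes q_s$, each summand being a generator of $S$ since $x_r\nearrow_V y_r$ and $p_s\nearrow_W q_s$. Hence $R\subseteq S$, closing the chain. Equivalently, one may phrase this step as the observation that both $\sigma(R)$ and $\sigma(S)$ equal the subspace $R^{\searrow_V}\otimes R^{\searrow_W}$ of $(V\otimes V)\otimes(W\otimes W)$.

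The only real obstacle is bookkeeping: the defining space lives in $V\otimes W\otimes V\otimes W$, where the two $V$-factors and the two $W$-factors are interleaved, while the relations $\searrow_V$ and $\searrow_W$ act on $V\otimes V$ and $W\otimes W$. Making the reordering isomorphism $\sigma$ explicit is what keeps the decomposition honest and prevents confusion about tensor positions; once that is in place, the statement follows directly from Proposition \ref{1.3} and the definitions.
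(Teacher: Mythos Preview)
Your proof is correct and follows essentially the same approach as the paper's: both establish $S\subseteq R^{\searrow_{V\otimes W}}$ by pushing generators through $\nearrow\Rightarrow\searrow$, and both obtain the reverse inclusion by noting $R^{\searrow_{V\otimes W}}\subseteq R$ and then proving $R\subseteq S$ via Proposition~\ref{1.3}. The only difference is cosmetic: you make the middle-swap isomorphism $\sigma$ explicit, whereas the paper writes the resulting identity $v\otimes w\otimes v'\otimes w'=\sum_{i,j}\lambda_i\mu_j\,x_i\otimes y_j\otimes x_i'\otimes y_j'$ directly.
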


\begin{proof}
Write $ K = \langle v\otimes w \otimes v' \otimes w' \mid v \nearrow_V v', w \nearrow_W w'\rangle.$ Let $v\otimes w \otimes v' \otimes w' \in K$ be a generator element. Then $v\nearrow_V v'$ and $w\nearrow_W w'$ and therefore $v\searrow_V v'$ and $w\searrow_W w'$. By definition, it follows that $v\otimes w \otimes v' \otimes w' \in R$. Thus, $v\otimes w \searrow_{V\otimes W} v'\otimes w'$ and consequently $v\otimes w \otimes v' \otimes w' \in R^{\searrow_{V\otimes W}}$. Hence $K \subseteq R^{\searrow_{V\otimes W}}$.

For the reverse inclusion, it is enough to show that $R \subseteq K$. So let ${v\otimes w \otimes v'\otimes w' \in R}$ be a generator. Then $v\searrow_V v'$ and $w\searrow_W w'$. By Proposition \ref{1.3}, we may therefore write $$v\otimes v' = \sum_i \lambda_i x_i \otimes x'_i, \quad \lambda_i\in \K, \; x_i \nearrow_V x'_i$$ and $$w\otimes w' = \sum_j \mu_j y_j \otimes y'_j, \quad \mu_j\in \K,\; y_j \nearrow_W y'_j.$$ Thus, 
\begin{align*}
	v\otimes w\otimes v' \otimes w' 
	& = \sum_{i,j} \lambda_i \mu_j x_i \otimes y_j \otimes x'_i \otimes y'_j \in K. \qedhere
\end{align*}

\end{proof}

\section{The homology digraph}

In this section, we define the homology digraph of a  preordered space. We show that the homology digraph is a directed homotopy invariant and examine its behavior with respect to the long exact homology sequence of a pair of preordered spaces. Furthermore, we establish an excision theorem for the homology digraph and show that the homology digraph is compatible with coproducts.

\begin{defin}
	Let \(X\) be a preordered space, and let \(A\) be a preordered subspace of \(X\). The \emph{homology digraph} of the pair \((X,A)\) is the directional graded vector space \(H_*(X,A)\)  where the pointing relation is generated by the relation \(\nearrow\) defined as follows: given relative homology classes \(\alpha, \beta  \in H_*(X,A)\) (of possibly different degrees), we set \(\alpha \nearrow \beta\) if there exist subspaces \(E, F \subseteq X\) such that 
	\begin{enumerate}[label=(\roman*)]
		\item \(\alpha \in \im H_*((E, E\cap A) \hookrightarrow (X,A))\);
		\item \(\beta \in \im H_*((F, F\cap A) \hookrightarrow (X,A))\); 
		\item \(\forall\, x\in E, y \in F\quad x \preceq_X y\).
	\end{enumerate}
	The \emph{homology digraph} of \(X\) is defined to be the homology digraph of the pair \((X,\emptyset)\). The pointing relations of the homology digraphs \(H_*(X,A)\) and \(H_*(X)\) and their generating relations will be denoted by \(\searrow_{X,A}\), \(\searrow_X\), \(\nearrow_{X,A}\), and \(\nearrow _X\), respectively.		
\end{defin}

\begin{rem}
	The \emph{homology graph} of a preordered space \(X\) would be its homology equipped with the relation \(\nearrow_X\), cf. \cite{hgraph}.
\end{rem}

\begin{exs} \label{hdorddircirc}
	(a) Consider $(X,A)$ equipped with the indiscrete preorder. Since every element is related to every element, we have that every homology class points to every homology class. 
	 
	(b) Let $X$ be a path-connected preordered space with discrete preorder, and let $\alpha, \beta \in H_\ast (X)$ be nonzero elements. Then \[\alpha \searrow_X \beta \iff \deg(\alpha) = \deg(\beta)=0.\] Indeed, if $\deg(\alpha) = \deg(\beta) =0$, then $\alpha, \beta \in \im H_\ast(\{x\} \hookrightarrow X)$ for any $x\in X$. Since $x \preceq_X x$, $\alpha \nearrow_X \beta$ and therefore $\alpha \searrow_X \beta$. If, conversely, $\alpha \searrow_X \beta$, then Proposition \ref{1.3} implies that there exist nonzero homology classes $\gamma$ and \(\delta\) such that \(\deg(\gamma) = \deg(\alpha)\), \(\deg(\delta) = \deg(\beta)\), and \(\gamma \nearrow_X \delta\). Hence there exist subspaces \(E, F \subseteq X\) such that \(\gamma \in \im H_*(E \hookrightarrow X)\), \(\delta \in \im H_*(F\hookrightarrow X)\), and \(x \preceq _X y\) for all \(x\in E\) and \(y \in F\). Since \(\preceq_X\) is the discrete preorder, this implies that \(E\) and \(F\) are singletons. Hence \(\deg(\gamma) = \deg(\delta) = 0\) and therefore also \(\deg(\alpha) = \deg(\beta) = 0\).

    (c) Consider the ordered circle \(O^1\) (see Figure \ref{figO1}), and let $\alpha, \beta \in H_\ast (O^1)$ be nonzero homology classes. If $\deg(\alpha)=0$, then we have \(\alpha \in \im H_*(E\hookrightarrow O^1)\) and \(\beta \in \im H_*(F\hookrightarrow O^1)\) for $E$ the set consisting only of the minimum and $F=O^1$. Thus, in this case, $\alpha \nearrow_{O^1} \beta$ and therefore $\alpha \searrow_{O^1} \beta$. In a similar way, when $\deg(\beta) = 0$, we can consider $E=O^1$ and $F$ the set consisting only of the maximum, and we have that $\alpha \nearrow_{O^1} \beta$, which implies that $\alpha \searrow_{O^1} \beta$. If both $\alpha$ and $\beta$ have degree $1$, then necessarily $E=F=O^1$, but since the maximum is not less than or equal to the minimum, we do not have \(\alpha \nearrow_{O^1} \beta\). Hence $R_2^{\searrow_{O^1}} =0$, and thus $\alpha \centernot{\searrow}_{O_1} \beta$.
 
	%(c) Consider the ordered circle and $\alpha, \beta \in H_\ast (O^1)$. If $deg(\alpha)=0$, then we can consider $E$ as the lower vertex, $F=O^1$ and we have that $\alpha \nearrow_{O^1} \beta$ therefore $\alpha \searrow_{O^1} \beta$. In a similar way, when $deg(\beta) = 0$ we can consider $E=O^1$ and $F$ the upper vertex and we have that $\alpha \nearrow_{O^1} \beta$, which implies that $\alpha \searrow_{O^1} \beta$. If both $\alpha$ and $\beta$ have degree $1$, then $E=F=O^1$, but the upper vertex is not related with the lower one, hence $R_2^{\searrow_{O^1}} =0$, thus $\alpha \centernot{\searrow} \beta$.
	
	(d) %In the case of the left Euclidean pospace, since every element of the subspace that contains the lower hole is related to every element of the subspace that contains the upper hole, we have that the lower homology class points to the upper one.
    Consider the Euclidean pospace on the left-hand side of Figure \ref{figeuclid}. Since every element of the boundary of the lower hole is related to every element of the boundary of the upper hole, we have that the homology class representing the lower hole points to the homology class representing the upper hole.
	
	(e) If we consider $X$ the Euclidean pospace on the right-hand side of Figure \ref{figeuclid}, there are no pointing relations between nontrivial one-dimensional homology classes. Indeed, let $\alpha$ and $\beta$ be nonzero homology classes of degree one, and suppose that $\alpha \in \im H_\ast (E \hookrightarrow X)$ and $\beta \in \im H_\ast (F \hookrightarrow X)$. Consider the regions $Z_1, Z_2, Z_3, Z_4$ depicted in the following figure:
 \begin{figure}[ht!] \label{figzonas}
	\begin{center}
		\begin{tikzpicture}[scale=0.4,on grid]
			\path[draw, fill=lightgray] (12,0)--(19,0)--(19,7)--(12,7)--cycle;
			\path[draw, fill=white]
			(13,4)--(15,4)--(15,6)--(13,6)--cycle
			(16,1)--(18,1)--(18,3)--(16,3)--cycle;	
             \path[draw, fill=black] (13.3,6)--(13.3,7);
             \path[draw, fill=black] (14.7,6)--(14.7,7);
             \path[draw, fill=black] (12,5.7)--(13,5.7);
             \path[draw, fill=black] (12,4.3)--(13,4.3);
             \path[draw, fill=black] (18,2.7)--(19,2.7);
             \path[draw, fill=black] (18,1.3)--(19,1.3);
             \path[draw, fill=black] (16.3,0)--(16.3,1);
             \path[draw, fill=black] (17.7,0)--(17.7,1);

             \node at (14,6.5) {\scalebox{0.75}{$2$}};
             \node at (12.5,5) {\scalebox{0.75}{$1$}};
             \node at (18.5,2) {\scalebox{0.75}{$3$}};
             \node at (17,0.5) {\scalebox{0.75}{$4$}};
		\end{tikzpicture}
		%\caption{Right Euclidean pospace}
	\end{center}
    
\end{figure}

Then we must have $E \cap Z_1 \neq \emptyset \neq E \cap Z_2$ or $E \cap Z_3 \neq \emptyset \neq E \cap Z_4$ and, similarly,  $F \cap Z_1 \neq \emptyset \neq F \cap Z_2$ or $F \cap Z_3 \neq \emptyset \neq F \cap Z_4$. Since \(x \not \preceq_X y\) for all \(x\in Z_2\) and \(y \in Z_1\cup Z_3 \cup Z_4\) and for all \(x\in Z_3\) and \(y \in Z_1\cup Z_2 \cup Z_4\), we do not have \(\alpha \nearrow_X \beta\). This implies that \(R^{\searrow_X}_2 = 0\), which in turn implies that \(\alpha \not \searrow_X \beta\).

\end{exs}

\paragraph{Monotone maps.} By the following proposition, the homology digraph is functorial:

\begin{prop} \label{nat}
	Let \((X,A)\) and \((Y,B)\) be pairs of preordered spaces, and let \(f \colon X \to Y\) be a monotone map such that \(f(A) \subseteq B\). Then  %\[\forall \, i,j \in \Z, \alpha \in H_i(X,A), \beta  \in H_j(X,A)\quad  \alpha \searrow_{X,A} \beta \implies f_*(\alpha) \searrow_{Y,B} f_*(\beta).\]
	the induced map \(f_*\colon H_*(X,A) \to H_*(Y,B)\) is a morphism of directional graded vector spaces.
\end{prop}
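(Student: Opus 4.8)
The plan is to invoke Lemma \ref{1.5}. Since \(f_*\colon H_*(X,A)\to H_*(Y,B)\) is a morphism of graded vector spaces by the usual functoriality of singular homology, and since the pointing relation \(\searrow_{X,A}\) is by definition generated by the relation \(\nearrow_{X,A}\), it suffices to verify the hypothesis of Lemma \ref{1.5}: namely, that whenever \(\alpha \nearrow_{X,A} \beta\), we have \(f_*(\alpha) \searrow_{Y,B} f_*(\beta)\). In fact I will establish the stronger statement that \(f_*(\alpha) \nearrow_{Y,B} f_*(\beta)\), which immediately gives \(f_*(\alpha) \searrow_{Y,B} f_*(\beta)\) because \(\nearrow_{Y,B}\,\subseteq\,\searrow_{Y,B}\).

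So suppose \(\alpha \nearrow_{X,A} \beta\), witnessed by subspaces \(E, F \subseteq X\) satisfying conditions (i)--(iii) of the definition. The natural candidates for witnesses on the target side are the images \(E' = f(E)\) and \(F' = f(F)\). First I would check condition (iii): given \(x' \in E'\) and \(y' \in F'\), write \(x' = f(x)\), \(y' = f(y)\) with \(x \in E\), \(y \in F\); then \(x \preceq_X y\) by (iii) for \((E,F)\), and monotonicity of \(f\) yields \(f(x) \preceq_Y f(y)\), i.e. \(x' \preceq_Y y'\). Next I would verify the image conditions (i) and (ii) for \(E'\) and \(F'\). The key observation is that \(f\) restricts to a map of pairs \((E, E\cap A) \to (E', E'\cap B)\): indeed \(f(E\cap A) \subseteq f(E) = E'\) and \(f(E\cap A) \subseteq f(A) \subseteq B\), so \(f(E\cap A) \subseteq E'\cap B\) — this is where the hypothesis \(f(A)\subseteq B\) is used.

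The heart of the argument is then a naturality (commutative square) observation. The restriction \((E,E\cap A)\to(E',E'\cap B)\) and the inclusions fit into a commutative square of pairs whose lower edge is \(f\colon (X,A)\to (Y,B)\) and whose vertical edges are the inclusions \((E,E\cap A)\hookrightarrow (X,A)\) and \((E',E'\cap B)\hookrightarrow (Y,B)\); commutativity at the level of spaces is immediate since both composites send \(x\in E\) to \(f(x)\in Y\). Applying the homology functor gives a commutative square, so if \(\alpha\) is the image of some \(\alpha_0 \in H_*(E,E\cap A)\) under the inclusion-induced map, then \(f_*(\alpha)\) equals the image of \((f|_E)_*(\alpha_0)\) under the inclusion-induced map \(H_*(E',E'\cap B)\to H_*(Y,B)\). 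Hence \(f_*(\alpha) \in \im H_*((E',E'\cap B)\hookrightarrow (Y,B))\), which is condition (i); the argument for \(f_*(\beta)\) and (ii) is identical. With all three conditions established for \(E'\) and \(F'\), we conclude \(f_*(\alpha)\nearrow_{Y,B} f_*(\beta)\), and Lemma \ref{1.5} finishes the proof.

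I do not anticipate a genuine obstacle here; the proof is essentially a bookkeeping exercise built on functoriality and monotonicity. The only point that requires a little care is the naturality square for relative homology, and in particular confirming that \(f\) genuinely descends to a map of pairs \((E,E\cap A)\to (E',E'\cap B)\) — this is exactly the step that consumes the hypothesis \(f(A)\subseteq B\). Everything else is routine diagram-chasing, and the reduction via Lemma \ref{1.5} is what makes the argument short.
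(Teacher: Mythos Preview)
Your proposal is correct and follows essentially the same approach as the paper: reduce via Lemma~\ref{1.5} to showing that \(\alpha \nearrow_{X,A} \beta\) implies \(f_*(\alpha) \nearrow_{Y,B} f_*(\beta)\), use \(f(E)\) and \(f(F)\) as witnesses, verify the image conditions via the naturality square for relative homology, and check the order condition by monotonicity. If anything, you are slightly more explicit than the paper in spelling out why \(f\) restricts to a map of pairs \((E,E\cap A)\to (f(E),f(E)\cap B)\).
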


\begin{proof}
	Consider relative homology classes \(\alpha, \beta  \in H_*(X,A)\) such that ${\alpha \nearrow_{X,A} \beta}$. Then there exist subspaces $E, F \subseteq X$ such that 
	\begin{enumerate}[label=(\roman*)]
		\item $\alpha \in \im H_\ast ((E, E\cap A) \hookrightarrow (X,A))$;
		\item $\beta \in \im H_* ((F, F\cap A) \hookrightarrow (X,A))$;
		\item $\forall\, x\in E, y\in F \quad x\preceq_X y$.
	\end{enumerate} 
The commutative diagram  
\[
\begin{tikzcd}
	H_* (E, E \cap A) \arrow{r}{g_\ast} \arrow[swap]{d} & H_* (f(E), f(E) \cap B) \arrow{d} \\
	H_* (X,A) \arrow[swap]{r}{f_\ast} & H_* (Y, B),
\end{tikzcd}
\]
in which \(g_*\) is induced by the restriction of \(f\) and the vertical maps are induced by the inclusions, shows that
$$f_\ast (\alpha) \in \im H_* ((f(E), f(E)\cap B) \hookrightarrow (Y,B)).$$ In a similar way, we have that $f_\ast (\beta) \in \im H_* ((f(F), f(F)\cap B) \hookrightarrow (Y,B)).$

Consider now $a\in f(E), b \in f(F)$. Then there exist $x\in E$ and $y\in F$ such that $a=f(x)$ and $b=f(y)$. By (iii) and since $f$ is monotone map, it follows that $a\preceq_Y b$. We then conclude that $f_\ast (\alpha) \nearrow_{Y,B} f_\ast (\beta)$. Hence also $f_\ast (\alpha) \searrow_{Y,B} f_\ast (\beta)$. By Lemma \ref{1.5}, it follows that \(f_*\) is a morphism of directional graded vector spaces.
\end{proof}

%A monotone map of preordered spaces that is a homotopy equivalence is called a \emph{dihomotopy equivalence} if it admits a monotone homotopy inverse. A dihomotopy equivalence induces an isomorphism of homology digraphs:

\paragraph{Dihomotopy invariance.} Proposition \ref{nat} immediately implies that the homology digraph is a directed homotopy invariant. More precisely, we have the following result:

\begin{theor}
	Let \(f\colon X \to Y\) be a monotone map of preordered spaces that is a homotopy equivalence with a monotone homotopy inverse. Then  \(f_*\colon H_*(X) \to H_*(Y)\) is an isomorphism of directional graded vector spaces.
\end{theor}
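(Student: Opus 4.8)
The plan is to deduce this theorem directly from the functoriality established in Proposition \ref{nat}, since the hard work of showing compatibility with the pointing relation has already been done there. Let $g\colon Y \to X$ be a monotone homotopy inverse of $f$. The hypothesis means that $f$ and $g$ are continuous maps that are homotopy inverse to each other in the ordinary (undirected) topological sense, but which happen to be monotone. Since both $f$ and $g$ are monotone maps of preordered spaces (with empty subspaces, so the condition $f(\emptyset)\subseteq\emptyset$ is trivially satisfied), Proposition \ref{nat} applies to each of them, giving morphisms of directional graded vector spaces $f_*\colon H_*(X) \to H_*(Y)$ and $g_*\colon H_*(Y) \to H_*(X)$.

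The key observation is that the underlying map of graded vector spaces $f_*$ is already known to be an isomorphism by the ordinary dihomotopy-invariance of singular homology: because $g\circ f \simeq \mathrm{id}_X$ and $f\circ g \simeq \mathrm{id}_Y$ as ordinary homotopies (dihomotopies are in particular homotopies, as noted in the discussion of dihomotopy in Section 2), functoriality of ordinary singular homology gives $g_*\circ f_* = (g\circ f)_* = \mathrm{id}_{H_*(X)}$ and $f_*\circ g_* = \mathrm{id}_{H_*(Y)}$. Hence $f_*$ is a bijective morphism of graded vector spaces with inverse $g_*$.

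It then remains only to check that a bijective morphism of directional graded vector spaces is an isomorphism in that category, i.e., that its inverse is again a morphism of directional graded vector spaces. But here the inverse is exactly $g_*$, which we have already shown is a morphism of directional graded vector spaces via Proposition \ref{nat}. Therefore $f_*$ is a morphism of directional graded vector spaces that is an isomorphism of graded vector spaces and whose set-theoretic inverse $g_*$ is also a morphism of directional graded vector spaces, so $f_*$ is an isomorphism in the category of directional graded vector spaces.

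The only point requiring a little care is the last one: being an isomorphism of directional graded vector spaces is strictly stronger than being a morphism that is bijective on underlying spaces, since the pointing relation must be preserved in both directions. I would make this explicit by noting that for $v, w \in H_*(Y)$ with $v \searrow_Y w$, applying the morphism $g_*$ yields $g_*(v) \searrow_X g_*(w)$, which is precisely the statement that the inverse $f_*^{-1} = g_*$ respects the pointing relations; this closes the argument and is where the full force of Proposition \ref{nat} applied to $g$ is used. I do not expect any genuine obstacle, as this is a formal consequence of Proposition \ref{nat} together with the classical homotopy invariance of singular homology.
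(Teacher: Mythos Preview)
Your proposal is correct and matches the paper's approach exactly: the paper gives no explicit proof for this theorem, simply noting that it follows immediately from Proposition~\ref{nat}, and what you have written is precisely the unpacking of that remark. One small wording quibble: the hypothesis already gives ordinary homotopies (the theorem says ``homotopy equivalence with a monotone homotopy inverse''), so your parenthetical about dihomotopies being homotopies is unnecessary, though harmless.
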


\paragraph{Exact sequences.} By Proposition \ref{nat}, the homomorphisms induced by the inclusions in the long exact homology sequence of a pair of preordered spaces are the components of morphisms of directional graded vector spaces. By the following proposition, this also holds for the connecting homomorphisms:

\begin{prop}
	Let \((X, A)\) be a pair of preordered spaces. Then the connecting homomorphisms \(H_k(X,A) \to H_{k-1}(A)\) of the long exact homology sequence of  \((X,A)\) constitute a morphism of directional graded vector spaces. %\[\forall \, i,j \in \Z, \alpha \in H_i(X,A), \beta  \in H_j(X,A)\quad  \alpha \searrow_{X,A} \beta \implies \delta_*(\alpha) \searrow_{A} \delta_*(\beta).\] 
\end{prop}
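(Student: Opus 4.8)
The plan is to reduce the statement to a check on generators by means of Lemma \ref{1.5} and then to invoke the naturality of the connecting homomorphism. Denote by \(\partial\) the degree \(-1\) map \(H_*(X,A) \to H_*(A)\) assembled from the connecting homomorphisms \(H_k(X,A) \to H_{k-1}(A)\). Since \(\partial\) is linear and the argument proving Lemma \ref{1.5} uses only linearity of the map together with Proposition \ref{1.3}, it remains valid here despite the degree shift; thus it suffices to show that \(\alpha \nearrow_{X,A} \beta\) implies \(\partial(\alpha) \searrow_A \partial(\beta)\).

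So I assume \(\alpha \nearrow_{X,A} \beta\) and take subspaces \(E, F \subseteq X\) witnessing this, that is, satisfying conditions (i)--(iii) of the definition of \(\nearrow_{X,A}\). Writing \(\alpha = i_*(\tilde\alpha)\) with \(\tilde\alpha \in H_*(E, E\cap A)\) and \(i\) the inclusion of pairs \((E, E\cap A)\hookrightarrow (X,A)\), the naturality of the connecting homomorphism with respect to \(i\) gives \(\partial(\alpha) = j_*(\partial_E(\tilde\alpha))\), where \(\partial_E\) is the connecting homomorphism of \((E, E\cap A)\) and \(j\colon E\cap A \hookrightarrow A\) is the inclusion. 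Hence \(\partial(\alpha) \in \im H_*(E\cap A \hookrightarrow A)\), and in exactly the same way \(\partial(\beta) \in \im H_*(F\cap A \hookrightarrow A)\).

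It then remains to verify the order condition for the subspaces \(E\cap A\) and \(F\cap A\) of \(A\). For \(x \in E\cap A\) and \(y \in F\cap A\), condition (iii) yields \(x \preceq_X y\), since \(E\cap A\subseteq E\) and \(F\cap A\subseteq F\); and because \(x, y\in A\) and \(A\) is a preordered subspace of \(X\), this is equivalent to \(x \preceq_A y\). Therefore \(E\cap A\) and \(F\cap A\) witness \(\partial(\alpha)\nearrow_A \partial(\beta)\), so \(\partial(\alpha)\searrow_A\partial(\beta)\), and the conclusion follows from Lemma \ref{1.5}.

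The one point that requires care is the degree shift, because Lemma \ref{1.5} is phrased for a degree-preserving morphism. I would address this either by remarking that the proof of that lemma never uses the degree of the map, or, equivalently, by reindexing so that \(\partial\) becomes a genuine degree-preserving morphism into the shift of \(H_*(A)\), whose pointing relation is literally the relation \(\searrow_A\). Beyond this bookkeeping the argument is entirely routine, relying only on naturality of \(\partial\) and on the defining equivalence for a preordered subspace.
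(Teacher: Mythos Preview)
Your proof is correct and follows essentially the same approach as the paper's: reduce via Lemma \ref{1.5} to checking \(\alpha \nearrow_{X,A} \beta \Rightarrow \partial(\alpha) \nearrow_A \partial(\beta)\), then use naturality of the connecting homomorphism to land \(\partial(\alpha)\) and \(\partial(\beta)\) in the images of \(H_*(E\cap A\hookrightarrow A)\) and \(H_*(F\cap A\hookrightarrow A)\), and finally invoke the preordered-subspace condition on \(A\). Your explicit handling of the degree shift is a worthwhile addition, since the paper's framework defines morphisms of directional graded vector spaces as degree-preserving and this point is not addressed there.
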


\begin{proof}
The proof is an easy adaptation of that of Proposition \ref{nat}. 
\iffalse 
 Consider homology classes \(\alpha \in H_k(X,A)\) and \(\beta \in H_l(X,A)\) such that $\alpha \nearrow_{X,A} \beta$, and let  $E, F \subseteq X$ be subspaces such that 
 \begin{itemize}
 	\item[(i)] $\alpha \in \im H_k ((E, E\cap A) \hookrightarrow (X,A))$;
 	\item[(ii)] $\beta \in \im H_l ((F, F\cap A) \hookrightarrow (X,A))$;
 	\item[(iii)] $\forall\, x\in E, y\in F \quad x\preceq_X y$.
 \end{itemize} 
The commutative diagram
 \[
 \begin{tikzcd}
 	H_k (E, E \cap A) \arrow{r}{\delta_\ast} \arrow[swap]{d} & H_{k -1} (E\cap A) \arrow{d} \\
 	H_k (X,A) \arrow{r}[swap]{\delta_\ast} & H_{k -1} (A),
 \end{tikzcd}
 \]
in which the vertical maps are induced by the inclusions, shows that $\delta_\ast (\alpha) \in \im H_{k-1} (E\cap A \hookrightarrow A)$. Similarly, $\delta_\ast (\beta) \in \im H_{l-1} (F\cap A \hookrightarrow A)$.

Finally, let $a\in E\cap A$ and $b\in F\cap A$. Then \(a \preceq_X b\). Since $A$ is a subspace of $X$, it follows that $a\preceq_A b$. Hence we conclude that $ \delta_*(\alpha) \nearrow_{A} \delta_*(\beta)$. Applying Lemma \ref{1.5}, we obtain that \(\delta_*\colon H_*(X,A) \to H_{\ast-1}(A)\) is a morphism of directional graded vector spaces. \fi
\end{proof}

\paragraph{Excision.} We next show that the excision theorem for singular homology extends to the homology digraph. Given a subset \(A\) of a topological space \(X\), we denote by \(\overline{A}^X\) its closure and by \({\rm int}_X(A)\) its interior. 

\begin{theor}
	Let \((X,A)\) be a pair of preordered spaces, and let \(U\) be a subset of \(A\) such that \({\overline{U}^X \subseteq {\rm{int}}_X(A)}\). %Consider the inclusion \({j\colon (X\setminus U, A\setminus U) \hookrightarrow (X,A)}\). 
	Then the inclusion induces an isomorphism of directional graded vector spaces \(H_*(X\setminus U, A\setminus U) \rightarrow H_*(X,A)\). 
	
	%for all \(\alpha \in H_i(X\setminus U,A\setminus U), \beta  \in H_j(X\setminus U,A\setminus U)\) \((i,j\in \Z)\), \[\alpha \searrow_{X\setminus U,A\setminus U} \beta \iff j_*(\alpha) \searrow_{X,A} j_*(\beta).\] 
\end{theor}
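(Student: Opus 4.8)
The plan is to deduce the result from the classical (purely topological) excision theorem together with Lemma \ref{1.5}. Write \(j\colon (X\setminus U, A\setminus U)\hookrightarrow (X,A)\) for the inclusion. This is a monotone map with \(j(A\setminus U)\subseteq A\), so Proposition \ref{nat} already gives that \(j_*\colon H_*(X\setminus U, A\setminus U)\to H_*(X,A)\) is a morphism of directional graded vector spaces, while ordinary excision gives that \(j_*\) is an isomorphism of graded vector spaces. Hence the only thing left to verify is that the inverse \(g = j_*^{-1}\) is again a morphism of directional graded vector spaces. Since \(\searrow_{X,A}\) is by definition generated by \(\nearrow_{X,A}\), Lemma \ref{1.5} reduces this to establishing the implication
\[\alpha \nearrow_{X,A}\beta \implies g(\alpha)\searrow_{X\setminus U, A\setminus U} g(\beta).\]

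To prove it, I would start from a pair \(\alpha\nearrow_{X,A}\beta\) witnessed by subspaces \(E, F\subseteq X\) as in the definition, and take \(E\setminus U\) and \(F\setminus U\) as candidate witnesses for \(g(\alpha)\nearrow_{X\setminus U, A\setminus U} g(\beta)\). Condition (iii) is immediate, since \(E\setminus U\subseteq E\), \(F\setminus U\subseteq F\) and the preorder on the subspace \(X\setminus U\) is the restriction of \(\preceq_X\). The substance lies in conditions (i) and (ii): one must show that \(g(\alpha)\) lies in the image of \(H_*\big((E\setminus U, (E\cap A)\setminus U)\hookrightarrow (X\setminus U, A\setminus U)\big)\), and symmetrically for \(g(\beta)\) and \(F\).

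The key step I would establish is that the excision hypothesis restricts to \(E\). Setting \(U_E = U\cap E\subseteq E\cap A\) and using that closures and interiors in the subspace \(E\) are obtained by intersecting with \(E\), one gets
\[\overline{U_E}^{\,E}\subseteq \overline{U}^X\cap E\subseteq {\rm int}_X(A)\cap E\subseteq {\rm int}_E(E\cap A),\]
the last inclusion holding because \({\rm int}_X(A)\cap E\) is open in \(E\) and contained in \(E\cap A\). Thus ordinary excision applies to \((E, E\cap A)\) and shows that \((j_E)_*\colon H_*(E\setminus U, (E\cap A)\setminus U)\to H_*(E, E\cap A)\) is an isomorphism, where \(j_E\) is the inclusion. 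Choosing \(a\in H_*(E, E\cap A)\) with \(i_*(a)=\alpha\) (possible by condition (i) for \(\alpha\), with \(i\colon (E, E\cap A)\hookrightarrow (X,A)\)) and putting \(a'=(j_E)_*^{-1}(a)\), the commuting square \(j\circ i' = i\circ j_E\) (with \(i'\colon (E\setminus U, (E\cap A)\setminus U)\hookrightarrow (X\setminus U, A\setminus U)\)) yields \(j_*(i'_*(a')) = i_*(a) = \alpha\), whence \(i'_*(a') = g(\alpha)\). So \(g(\alpha)\) lies in the required image, and the same argument applied to \(F\) handles \(g(\beta)\); this gives \(g(\alpha)\nearrow_{X\setminus U, A\setminus U} g(\beta)\) and hence \(g(\alpha)\searrow_{X\setminus U, A\setminus U} g(\beta)\).

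I expect the main obstacle to be precisely this middle step: realizing that the witnessing subspaces \(E\) and \(F\) themselves inherit a genuine excision situation, so that the excision machinery can be invoked locally on \(E\) and \(F\) rather than only on \(X\). Once the subspace condition \(\overline{U_E}^{\,E}\subseteq {\rm int}_E(E\cap A)\) is checked, the remainder is a routine diagram chase followed by the application of Lemma \ref{1.5}.
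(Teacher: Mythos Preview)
Your proposal is correct and follows essentially the same route as the paper: reduce to showing \(j_*^{-1}\) is directional via Lemma \ref{1.5}, then for a witness pair \(E,F\) verify that the excision hypothesis restricts to \(E\) (and \(F\)) via the chain \(\overline{U\cap E}^{\,E}\subseteq \overline{U}^X\cap E\subseteq {\rm int}_X(A)\cap E\subseteq {\rm int}_E(E\cap A)\), and conclude by the obvious commuting square of inclusions. The only cosmetic difference is that the paper phrases the reduction as starting from \(j_*(\alpha)\nearrow_{X,A} j_*(\beta)\) rather than from \(\alpha\nearrow_{X,A}\beta\), which is the same statement since \(j_*\) is bijective.
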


\begin{proof}
	Let \(j\) be the inclusion \( {(X\setminus U, A\setminus U) \hookrightarrow (X,A)}\). Then, by the usual excision theorem for topological spaces, ${j_\ast \colon H_*(X\setminus U, A\setminus U) \to  H_*(X,A)}$ is an isomorphism of graded vector spaces. By Proposition \ref{nat}, it suffices to show that $j^{-1}_\ast $ is a morphism of directional graded vector spaces. Let ${\alpha, \beta  \in H_* (X\setminus U, A\setminus U)}$ be homology classes such that $j_\ast (\alpha) \nearrow_{X,A} j_\ast (\beta)$. By Lemma  \ref{1.5}, it is enough to show that \(\alpha \nearrow_{X\setminus U,A\setminus U} \beta\). Let $E, F \subseteq X$ be subspaces such that 
	\begin{itemize}
		\item[(i)] $j_\ast (\alpha) \in \im H_* ((E, E\cap A) \hookrightarrow (X,A))$;
		\item[(ii)] $j_\ast (\beta) \in \im H_* ((F, F\cap A) \hookrightarrow (X,A))$;
		\item[(iii)] $\forall\, x\in E, y\in F \quad x\preceq_X y$.
	\end{itemize}

	 %The sufficient condition follows immediatly by proposition \ref{nat}. Suppose now that $j_\ast (\alpha) \nearrow_{X,A} j_\ast (\beta)$.
	
	We have $E\cap U \subseteq E\cap A \subseteq E$ and  $$\overline{E\cap U}^E \subseteq E \cap \overline{U}^X  \subseteq E \cap {\rm{int}}_X (A)  \subseteq {\rm{int}}_E (E\cap A).$$ 
	Therefore, by excision, both horizontal maps in the following commutative diagram, in which all maps are induced by the inclusions, are isomorphisms:		
		\[
	\begin{tikzcd}
		H_* (E\setminus (E\cap U), (E \cap A)\setminus (E\cap U)) \arrow{r}{\cong} \arrow[swap]{d}{} & H_* (E, E\cap A) \arrow{d}{} \\
		H_* (X\setminus U,A\setminus U) \arrow[r,"\cong","j_\ast"'] & H_* (X,A)
	\end{tikzcd}
	\]
	This diagram shows that 
	\[\alpha \in \im H_*((E\setminus (E\cap U), (E \cap A)\setminus (E\cap U))\hookrightarrow (X\setminus U,A\setminus U)).\]
	Since $(E \cap A) \setminus (E \cap U) = (E \setminus U) \cap (A \setminus U)$, this means that
	\[\alpha \in \im H_*((E\setminus U, (E \setminus U) \cap (A \setminus U))\hookrightarrow (X\setminus U,A\setminus U)).\]	
	In a similar way, 
	$$\beta \in \im H_* ((F\setminus U, (F\setminus U) \cap (A\setminus U)) \hookrightarrow (X\setminus U, A\setminus U)).$$
	Finally, since $X\setminus U \subseteq X$, we have $x \preceq_{X\setminus U} y$ for all $x\in E\setminus U$ and $y\in F\setminus U$, and so we conclude that $\alpha \nearrow_{X\setminus U, A\setminus U} \beta$. 
\end{proof}

\paragraph{Coproducts.} The homology digraph is compatible with coproducts: 

\begin{theor} \label{sum}
	Let \((X_i)_{i\in \I}\) be a family of preordered spaces. 
	Then the inclusions \(\iota_j \colon X_j \hookrightarrow \coprod_{i\in \I}X_i\) induce an isomorphism of directional graded vector spaces
	\[\bigoplus_{i\in \I} H_*(X_i) \to H_*(\coprod_{i\in \I} X_i).\]
	%Let \(X\) and \(Y\) be preordered spaces, and consider the isomorphism of graded vector spaces 
	%\[\phi \colon H_*(X) \oplus H_*(Y) \to H_*(X \amalg Y).\]
	%Then for all \(\alpha, \beta \in H_*(X) \oplus H_*(Y)\), \(\alpha \searrow \beta \iff \phi(\alpha)\searrow \phi(\beta)\).
	
\end{theor}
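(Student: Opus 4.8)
The plan is to begin with the purely topological fact that singular homology sends coproducts to direct sums, so that the inclusions $\iota_j$ induce an isomorphism of \emph{graded vector spaces} $\varphi\colon \bigoplus_{i\in\I} H_*(X_i) \to H_*(\coprod_{i\in\I} X_i)$ (a singular simplex has connected image and therefore lands in a single component, so the singular chain complex of the coproduct splits as the direct sum of the chain complexes of the components). Since $\varphi$ is already a graded vector space isomorphism, the whole content of the theorem is that both $\varphi$ and $\varphi^{-1}$ respect the pointing relations.

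That $\varphi$ itself is a morphism of directional graded vector spaces I would deduce formally. Each inclusion $\iota_j\colon X_j \hookrightarrow \coprod_{i\in\I} X_i$ is monotone, so by Proposition \ref{nat} each $(\iota_j)_*$ is a morphism of directional graded vector spaces. As the direct sum is the coproduct in the category of directional graded vector spaces (Corollary \ref{coprod}), the universal property yields a morphism out of $\bigoplus_{i\in\I} H_*(X_i)$ extending the family $((\iota_j)_*)_{j\in\I}$, and this morphism is precisely $\varphi$. (One could also argue directly: for nonzero $\alpha \searrow_{\bigoplus} \beta$, Proposition \ref{oplus} places both in a single summand $H_*(X_j)$ with $\alpha \searrow_{X_j} \beta$, whence $\varphi(\alpha) = (\iota_j)_*(\alpha) \searrow (\iota_j)_*(\beta) = \varphi(\beta)$.)

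The crux is to show that $\varphi^{-1}$ is a morphism. By Lemma \ref{1.5} it suffices to prove that $\alpha' \nearrow_{\coprod X_i} \beta'$ implies $\varphi^{-1}(\alpha') \searrow_{\bigoplus} \varphi^{-1}(\beta')$. The zero cases are immediate from Proposition \ref{bilin}, so I would assume $\alpha', \beta'$ nonzero and fix witnessing subspaces $E,F$ as in the definition of $\nearrow$. The key observation is that condition (iii) forces $E$ and $F$ into a \emph{single} component: in the coproduct only points of the same $X_j$ are comparable, so choosing a point of the nonempty set $F$ shows $E \subseteq X_j$ for some $j$, and choosing a point of $E$ shows $F \subseteq X_j$ for the same $j$. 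Because $X_j$ is a preordered subspace of the coproduct, the inclusions $E, F \hookrightarrow \coprod_{i\in\I} X_i$ factor through $X_j$; hence $\varphi^{-1}(\alpha')$ and $\varphi^{-1}(\beta')$ lie in the summand $H_*(X_j)$ and are represented there by classes in $\im H_*(E\hookrightarrow X_j)$ and $\im H_*(F\hookrightarrow X_j)$, respectively. Since $x \preceq_{X_j} y$ for all $x\in E$, $y\in F$, this gives $\varphi^{-1}(\alpha') \nearrow_{X_j} \varphi^{-1}(\beta')$, hence $\varphi^{-1}(\alpha') \searrow_{X_j} \varphi^{-1}(\beta')$, and Proposition \ref{oplus} upgrades this to $\varphi^{-1}(\alpha') \searrow_{\bigoplus} \varphi^{-1}(\beta')$.

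I expect the main obstacle to be exactly the single-component argument of the last paragraph, together with the verification that the witnesses $E,F$ may be taken inside one $X_j$ and that the components of $\alpha',\beta'$ genuinely land in the corresponding summand of the direct sum; the rest is formal and is handled cleanly by Lemma \ref{1.5} and Proposition \ref{oplus}. Combining the two halves shows $\varphi$ is an isomorphism of directional graded vector spaces, completing the proof.
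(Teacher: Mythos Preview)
Your proposal is correct and follows essentially the same route as the paper: you use Proposition~\ref{nat} together with Corollary~\ref{coprod} to see that $\varphi$ is a morphism, and for $\varphi^{-1}$ you invoke Lemma~\ref{1.5}, reduce to nonzero classes with $\alpha' \nearrow_{\coprod X_i} \beta'$, use condition~(iii) to force the witnesses $E,F$ into a single $X_j$, and then read off $\varphi^{-1}(\alpha') \searrow_{X_j} \varphi^{-1}(\beta')$ via Proposition~\ref{oplus}. The only cosmetic difference is that the paper phrases the factorization through $X_j$ via an explicit commutative square rather than in words.
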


\begin{proof}
	Since  \(\bigoplus_{i\in \I} H_*(X_i)\) is the coproduct of the directional graded vector spaces \(H_*(X_i)\), Proposition \ref{nat} implies that the isomorphism of graded vector spaces \(\varphi \colon \bigoplus_{i\in \I} H_*(X_i) \to H_*(\coprod_{i\in \I} X_i)\) induced by the inclusions \(\iota_j\) is a morphism of directional graded vector spaces. 
	
	In order to show that \(\varphi^{-1}\) is a morphism of directional graded vector spaces, consider nonzero classes 
	\({\xi, \zeta \in H_*(\coprod_{i\in \I}X_i)}\) such that \({\xi \nearrow_{\coprod_{i\in \I}X_i} \zeta}\), and let \(E, F \subseteq  \coprod_{i\in \I}X_i\) be subspaces such that
	\begin{enumerate}[label=(\roman*)]
		\item $\xi \in \im H_* (E \hookrightarrow \coprod_{i\in \I}X_i)$;
		\item $\zeta \in \im H_* (F \hookrightarrow \coprod_{i\in \I}X_i)$;
		\item $\forall\, x\in E, y\in F \quad x\preceq_{\coprod_{i\in \I}X_i} y$.
	\end{enumerate}
	Since \(\xi\not=0\) and \(\zeta\not=0\), we have \(E\not=\emptyset\) and \(F\not= \emptyset\). By (iii), it follows that there exists an index \(j\in \I\) such that \(E,F \subseteq X_j\). The commutative diagram
	\[
	\begin{tikzcd}
		H_* (E) \arrow{r}{=} \arrow[swap]{d}{} & H_* (E) \arrow{d}{} \\
		H_* (X_j) \arrow{r}{\iota_{j\ast}} & H_* (\coprod_{i\in \I}X_i),
	\end{tikzcd}
	\] 
	in which the vertical maps are induced by the inclusions, shows that there exists a homology class \(\upsilon \in \im H_*(E \hookrightarrow X_j)\) such that \(\iota_{j\ast}(\upsilon) = \xi\). Similarly, there exists a homology class \(\nu \in \im H_*(F \hookrightarrow X_j)\) such that \(\iota_{j\ast}(\nu) = \zeta\). By (iii), \(\upsilon \nearrow_{X_j} \nu\) and hence \(\upsilon \searrow_{X_j} \nu\). Viewing \(\upsilon\) and \(\nu\) as elements of \(\bigoplus_{i\in \I} H_*(X_i)\), we obtain that \(\varphi^{-1}(\xi) = \upsilon \searrow_{\bigoplus_{i\in \I} H_*(X_i)} \nu = \varphi^{-1}(\zeta)\). By Lemma \ref{1.5}, it follows that \(\varphi^{-1}\) is a morphism of directional graded vector spaces. 	
\end{proof}

\begin{rem}
	Let \(X\) and \(Y\) be preordered spaces, and let \(x_0 \in X\) and \({y_0\in Y}\) be minimal elements such that the inclusions  \(\{x_0\} \hookrightarrow X\) and \({\{y_0\} \hookrightarrow Y}\) are closed cofibrations. Then the wedge
	\[X\vee Y = \faktor{X\amalg Y}{x_0 \sim y_0}\]
	is a preordered space with respect to the preorder given by
	\[a \preceq_{X\vee Y} b  \iff  a, b \in X, a \preceq_X b \;\, \mbox{or} \;\, a, b \in Y, a \preceq_Y b,\]    
\iffalse %%

Proof that this is a preorder%%%%%%%%%%%

 Note first that if \(a, b \in X\) and \(a \preceq_{X\vee Y} b\), then \(a \preceq_{X} b\). Indeed, otherwise one would have \(a, b\in Y\) and \(a \preceq_{Y} b\), which would imply \(a = b = x_0 = y_0\) and therefore \(a \preceq_{X} b\) after all. Similarly, if \(a, b \in Y\) and \(a \preceq_{X\vee Y} b\), then \(a \preceq_{Y} b\). It is clear that \(\preceq_{X\vee Y}\) is reflexive. Let \(a \preceq_{X\vee Y} b \preceq_{X\vee Y} c\). If \(a,b,c \in X\), \(a \preceq_{X} c\). If \(a, b \in X\) and \(c \notin X\), then \(b \in X \cap Y = \{x_0 = y_0\}\) and hence \(a = b = y_0 \preceq_{X\vee Y} c\). Suppose \(a,c \in X\) and \(b \notin X\). Then \(a = c = x_0 = y_0\), which is impossible because \(y_0 \not= b \preceq_{Y} c\). Suppose \(a \notin X\) and \(b, c \in X\). Then \(b \in X \cap Y = \{x_0 = y_0\}\), which is impossible because \(y_0 \not= a \preceq_{Y} b\). Let \(a\in X\) and \(b,c \notin X\). Then \(a = x_0 = y_0\) and \(a \preceq_{X\vee Y} c\). Suppose \(b \in X\) and \(a,c \notin X\). Then \(b= y_0\), which is impossible because \(a \not= y_0\) and \(a \preceq_{Y} b\). Suppose \(c \in X\) and \(a,b \notin X\). Then \(c = y_0\), which is impossible because \(y_0 \not= b \preceq_Y c\). If \(a,b,c \notin X\), \(a \preceq_Y c\). It follows that \(\preceq_{X\vee Y}\) is transitive. 
\fi %%%%%%%%%%%%%%%%%%%%%%%%%%%%%%%%%%%%%%%%%
	and the proof of Theorem \ref{sum} can be adapted to show that the inclusions \((X,x_0) \hookrightarrow (X \vee Y,x_0=y_0)\) and \((Y,y_0) \hookrightarrow (X \vee Y,x_0=y_0)\) induce an isomorphism of directional graded vector spaces
	\[H_*(X,x_0) \oplus H_*(Y,y_0) \to H_*(X\vee Y,  x_0 = y_0).\]
 
    It is also possible to generalize the arguments of the proof of Theorem \ref{sum} to establish that given a family \(((X_i, A_i))_{i\in \I}\) of pairs of preordered spaces, the inclusions \((X_j, A_j) \hookrightarrow \coprod_{i\in \I}(X_i, A_i)\) induce an isomorphism of directional graded vector spaces
	\[\bigoplus_{i\in \I} H_*(X_i, A_i) \to H_*(\coprod_{i\in \I} X_i, \coprod_{i\in \I}A_i).\]		
\end{rem}

%%%%%%%%%%%%%%%%%%%%%%%%%%%%%%%%%%%%%%%%%%%%%%%%%%%%%%%%

\section{The homology digraph of a product}

Let $X$ and $Y$ be preordered spaces. By the topological K\"unneth theorem, the homology cross product
\[\times \colon H_*(X)\otimes H_*(Y) \to H_*(X\times Y),\; \alpha\otimes \beta \mapsto \alpha \times \beta\]
is an isomorphism of graded vector spaces. In this section, we prove that it actually is an isomorphism of directional graded vector spaces. It is thus possible to compute the homology digraph of \(X\times Y\) from the homology digraphs of \(X\) and \(Y\). 

\iffalse

\begin{lem} \label{mergelem}
	Consider subspaces $A$, $A' \subseteq X$ and $B$, $B' \subseteq Y$ such that $a \preceq_X a'$ for all points $a \in A$ and $a' \in A'$ and $b \preceq_Y b'$ for all points $b \in B$ and $b'\in B'$. Then $(a,b) \preceq_{X\times Y} (a',b')$ for all points $(a,b) \in A\times B$ and $(a',b') \in A'\times B'$.  
\end{lem}

\begin{proof}
	Obvious.
\end{proof}

\fi

%The cross product preserves the pointing relation:

\begin{lem} \label{crossprop}
	Let $\alpha, \alpha' \in H_*(X)$ and ${\beta, \beta' \in H_*(Y)}$ be homology classes such that $\alpha \nearrow_X \alpha'$ and $\beta \nearrow_Y \beta'$. Then $\alpha \times \beta \nearrow_{X\times Y} \alpha'\times \beta'$. 
\end{lem}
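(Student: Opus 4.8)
The plan is to unpack the definition of the generating relation $\nearrow$ on each factor, form product subspaces, and verify the three defining conditions for $\nearrow_{X\times Y}$, with the naturality of the homology cross product doing the essential work.

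First I would extract witnessing data from the hypotheses. From $\alpha \nearrow_X \alpha'$ I obtain subspaces $E_1, F_1 \subseteq X$ with $\alpha \in \im H_*(E_1 \hookrightarrow X)$, $\alpha' \in \im H_*(F_1 \hookrightarrow X)$, and $x \preceq_X x'$ whenever $x \in E_1$ and $x' \in F_1$; from $\beta \nearrow_Y \beta'$ I obtain subspaces $E_2, F_2 \subseteq Y$ with the analogous three properties. The natural guess is to take $E = E_1 \times E_2$ and $F = F_1 \times F_2$ as the witnessing subspaces of $X \times Y$, and I would check that they satisfy conditions (i)--(iii) in the definition of $\alpha \times \beta \nearrow_{X\times Y} \alpha'\times \beta'$.

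Condition (iii) is immediate from the componentwise preorder on the product: for $(x,y)\in E$ and $(x',y')\in F$ we have $x\preceq_X x'$ and $y\preceq_Y y'$, hence $(x,y)\preceq_{X\times Y}(x',y')$. For conditions (i) and (ii), I would write $\alpha=(i_1)_*\tilde\alpha$ and $\beta=(i_2)_*\tilde\beta$ with $\tilde\alpha\in H_*(E_1)$, $\tilde\beta\in H_*(E_2)$ and $i_1, i_2$ the respective inclusions, and then appeal to the naturality of the cross product: applied to $i_1$ and $i_2$ it yields $(i_1\times i_2)_*(\tilde\alpha\times\tilde\beta)=\alpha\times\beta$, so that $\alpha\times\beta\in\im H_*(E\hookrightarrow X\times Y)$; symmetrically, $\alpha'\times\beta'\in\im H_*(F\hookrightarrow X\times Y)$.

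The only point requiring care --- and the closest thing to an obstacle --- is the naturality square for the cross product with respect to the inclusions $i_1$ and $i_2$. This is a standard fact, valid already at the chain level through the Eilenberg--Zilber transformation, and in particular $\tilde\alpha\times\tilde\beta$ is a well-defined class in $H_*(E_1\times E_2)$ without any appeal to the K\"unneth isomorphism for these subspaces. Everything else is a direct substitution into the definition of the generating relation.
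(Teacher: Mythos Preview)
Your proposal is correct and follows essentially the same approach as the paper: take the product of the witnessing subspaces, use naturality of the cross product for conditions (i) and (ii), and use the componentwise preorder for condition (iii). The only difference is notational (the paper writes $E,E',F,F'$ where you write $E_1,F_1,E_2,F_2$).
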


\begin{proof}
	Consider subspaces $E$, $E'\subseteq X$ and $F$, $F' \subseteq Y$ such that ${\alpha \in \im\, H_*(E \hookrightarrow X)}$, ${\alpha' \in \im\, H_*(E' \hookrightarrow X)}$, ${\beta \in \im\, H_*(F \hookrightarrow Y)}$, ${\beta' \in \im\, H_*(F' \hookrightarrow Y)}$, $x \preceq_X x'$ for all points $x \in E$ and $x' \in E'$, and $y \preceq_Y y'$ for all points $y \in F$ and $y' \in F'$. The commutative diagram
	\[
	\xymatrix{
		H_*(E) \otimes H_*(F) \ar[r]^(.55){\times} \ar[d]_{} & H_*(E\times F) \ar[d]^{}\\
		H_*(X) \otimes H_*(Y) \ar[r]_(.55){\times}  & H_*(X\times Y),
	}
	\]
	in which the vertical maps are induced by the inclusions, 
	shows that ${\alpha \times \beta \in \im \, H_*(E \times F \hookrightarrow X \times Y)}$. An analogous argument shows that ${\alpha' \times \beta' \in {\im \, H_*(E' \times F' \hookrightarrow X \times Y)}}$. Since $(x,y) \preceq_{X\times Y} (x',y')$ for all points ${(x,y) \in E\times F}$ and $(x',y')\in E'\times F'$, we have $\alpha \times \beta \nearrow_{X\times Y} \alpha'\times \beta'$. 
\end{proof}

\begin{lem} \label{splittimes}
	Consider $\xi,\xi' \in H_*(X\times Y)$  such that ${\xi \nearrow_{X\times Y} \xi'}$. Then there exist homology classes $\alpha_i, \alpha'_j \in H_*(X)$  and $\beta_i,  \beta'_j \in H_*(Y)$ $(i = 1, \dots, m,$ $j= 1, \dots, n)$ such that $\xi = \sum_i \alpha_i \times \beta_i$, $\xi' = \sum_j \alpha'_j \times \beta'_j$, and $\alpha_i \nearrow_X \alpha'_j$ and $\beta_i \nearrow_Y \beta'_j$ for all $i$ and $j$.
\end{lem}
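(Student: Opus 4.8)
The plan is to unpack the relation $\xi \nearrow_{X\times Y} \xi'$ into its defining geometric data, project that data onto the two factors, and then transport a Künneth decomposition from the factors back up to $X\times Y$ via naturality of the cross product.

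First I would apply the definition of $\nearrow_{X\times Y}$ to obtain subspaces $E, F \subseteq X\times Y$ with $\xi \in \im H_*(E \hookrightarrow X\times Y)$, $\xi' \in \im H_*(F \hookrightarrow X\times Y)$, and $p \preceq_{X\times Y} q$ for all $p\in E$, $q\in F$. I then set $A = \mathrm{pr}_X(E)$, $B = \mathrm{pr}_Y(E)$, $A' = \mathrm{pr}_X(F)$, and $B' = \mathrm{pr}_Y(F)$. Immediately $E \subseteq A\times B$ and $F \subseteq A'\times B'$. Because the preorder of a product is componentwise, the hypothesis $p \preceq_{X\times Y} q$ yields $a \preceq_X a'$ for all $a\in A$, $a'\in A'$ and $b \preceq_Y b'$ for all $b\in B$, $b'\in B'$: given $a\in A$ and $a'\in A'$, choose $b, b'$ with $(a,b)\in E$ and $(a',b')\in F$, and read off the first coordinate of $(a,b) \preceq_{X\times Y} (a',b')$ (and symmetrically the second).

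Next, since $E \subseteq A\times B$, the inclusion $E \hookrightarrow X\times Y$ factors through $A\times B$, so $\xi \in \im H_*(A\times B \hookrightarrow X\times Y)$; choose $\eta \in H_*(A\times B)$ mapping to $\xi$. Applying the Künneth theorem to the factors $A$ and $B$, the cross product $H_*(A)\otimes H_*(B) \to H_*(A\times B)$ is an isomorphism, so I may write $\eta = \sum_i a_i \times b_i$ with $a_i \in H_*(A)$ and $b_i \in H_*(B)$ homogeneous. Pushing this through the naturality square of the cross product, whose vertical maps are induced by the inclusions $A \hookrightarrow X$ and $B \hookrightarrow Y$, gives $\xi = \sum_i \alpha_i \times \beta_i$, where $\alpha_i \in \im H_*(A \hookrightarrow X)$ and $\beta_i \in \im H_*(B \hookrightarrow Y)$ are the images of $a_i$ and $b_i$. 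The same argument applied to $F \subseteq A'\times B'$ produces $\xi' = \sum_j \alpha'_j \times \beta'_j$ with $\alpha'_j \in \im H_*(A' \hookrightarrow X)$ and $\beta'_j \in \im H_*(B' \hookrightarrow Y)$.

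Finally I would verify the pointing relations, and here the key feature is that the witnessing subspaces $A, A'$ (resp.\ $B, B'$) are independent of the indices, so a single application of the definition covers all pairs at once: since $\alpha_i \in \im H_*(A \hookrightarrow X)$, $\alpha'_j \in \im H_*(A' \hookrightarrow X)$, and $a \preceq_X a'$ for all $a\in A$, $a'\in A'$, one gets $\alpha_i \nearrow_X \alpha'_j$ for every $i, j$, and symmetrically $\beta_i \nearrow_Y \beta'_j$. I expect the only genuine point of care to be the transport step of the third paragraph: one must decompose $\eta$ on the factors $A$ and $B$ and invoke naturality of the cross product, rather than attempt to decompose $\xi$ directly in $H_*(X\times Y)$, precisely because it is $A, B$ (and $A', B'$), not $X, Y$, that carry the preorder information needed for the concluding verification.
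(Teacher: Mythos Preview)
Your proposal is correct and follows essentially the same argument as the paper's proof: unpack the witnessing subspaces for $\nearrow_{X\times Y}$, project them to the two factors, apply the K\"unneth isomorphism on the resulting rectangles, and push the decomposition forward via naturality of the cross product. The only differences are notational (the paper calls your $E,F$ by $U,U'$ and your $A,B,A',B'$ by $E,F,E',F'$).
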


\begin{proof}
    Let $U$ and $U'$ be subspaces of ${X \times Y}$ such that 
    \begin{enumerate}[label=(\roman*)]
        \item $\xi \in {\im \, H_*(U \hookrightarrow X\times Y)}$;
        \item $\xi' \in \im \, {H_*(U' \hookrightarrow X\times Y)}$;
        \item \(\forall \, (x,y) \in U, (x',y') \in U'\quad (x,y) \preceq_{X\times Y} (x',y')\).
    \end{enumerate}
    Consider the projections \(pr_X\colon X\times Y \to X\) and \(pr_Y\colon X \times Y\to Y\), and write \(E = pr_X(U)\), \(F = pr_Y(U)\), \(E' = pr_X(U')\), and \(F' = pr_Y(U')\). Then %\(U \subseteq E\times F\) and  \(U'\subseteq E'\times F'\). Moreover, $a \preceq_X a'$ for all \(a\in E\) and \(a'\in E'\) and $b \preceq_Y b'$ for all \(b\in F\) and \(b'\in F'\).
    \begin{enumerate}[label=(\arabic*)]
        \item $U \subseteq E\times F$;
        \item $U' \subseteq E'\times F'$;
        \item \(\forall\, x \in E, x' \in E'\quad  x \preceq_X x'\); 
        \item \(\forall\, y \in F, y'\in F'\quad y \preceq_Y y'\). 		
    \end{enumerate}
    By conditions (i), (ii), (1), and (2), we have $\xi \in \im H_*(E\times F \hookrightarrow X\times Y)$ and $\xi' \in \im H_*(E'\times F' \hookrightarrow X\times Y)$. The commutative diagram
    \[
    \xymatrix{
        H_*(E) \otimes H_*(F) \ar[r]^(.55){\cong}_(.55){\times} \ar[d] & H_*(E\times F) \ar[d]\\
        H_*(X) \otimes H_*(Y) \ar[r]^(.55){\cong}_(.55){\times}  & H_*(X\times Y),
    }
    \]
    in which the vertical maps are induced by the inclusions, shows that there exist homology classes $\alpha_i \in \im H_*(E \hookrightarrow X)$  and $\beta_i \in \im H_*(F\hookrightarrow Y)$ ${(i = 1, \dots, m)}$ such that $\xi = \sum_i \alpha_i \times \beta_i$. Similarly, there exist homology classes $\alpha'_j \in \im H_*(E' \hookrightarrow X)$  and $\beta'_j \in \im H_*(F'\hookrightarrow Y)$ $(j = 1, \dots, n)$ such that $\xi' = \sum_j \alpha'_j \times \beta'_j$. By condition (3), $\alpha_i \nearrow_X \alpha'_j$ for all $i$ and $j$. By condition (4), $\beta_i \nearrow_Y \beta'_j$ for all $i$ and $j$. 
\end{proof}

\begin{theor}\label{cross}
	The cross product \(\times \colon H_*(X)\otimes H_*(Y) \to H_*(X\times Y)\) is an isomorphism of directional graded vector spaces.
	%For all elements \(\theta \in (H_*(X)\otimes H_*(Y))_k\) and  \(\theta' \in {(H_*(X)\otimes H_*(Y))_l}\), \[\theta \searrow \theta' \iff \times (\theta) \searrow \times(\theta').\] 
\end{theor}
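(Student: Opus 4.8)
The plan is to lean on the topological Künneth theorem, which already provides that $\times$ is an isomorphism of graded vector spaces, and to promote this to an isomorphism of directional graded vector spaces by verifying that both $\times$ and its inverse respect the pointing relations. The organizing observation is that, by Proposition \ref{tensor}, the pointing relation $\searrow_{H_*(X)\otimes H_*(Y)}$ is the bilinear relation generated by the relation $\nearrow$ given by $\alpha\otimes\beta \nearrow \alpha'\otimes\beta'$ whenever $\alpha\nearrow_X\alpha'$ and $\beta\nearrow_Y\beta'$; indeed, Proposition \ref{tensor} identifies $R^{\searrow_{H_*(X)\otimes H_*(Y)}}$ with $\langle \alpha\otimes\beta\otimes\alpha'\otimes\beta' \mid \alpha\nearrow_X\alpha',\ \beta\nearrow_Y\beta'\rangle$, and a bilinear relation is determined by its smallest defining vector space. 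This reduces both directions to applications of Lemma \ref{1.5}.

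First I would show that $\times$ is a morphism of directional graded vector spaces. By the observation above together with Lemma \ref{1.5}, it suffices to check that $\alpha\nearrow_X\alpha'$ and $\beta\nearrow_Y\beta'$ imply $\alpha\times\beta \searrow_{X\times Y} \alpha'\times\beta'$. This is immediate from Lemma \ref{crossprop}, which in fact yields the stronger conclusion $\alpha\times\beta \nearrow_{X\times Y} \alpha'\times\beta'$.

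The substantial direction is to show that $\times^{-1}$ is a morphism of directional graded vector spaces. Again by Lemma \ref{1.5}, it is enough to take $\xi,\xi' \in H_*(X\times Y)$ with $\xi \nearrow_{X\times Y} \xi'$ and verify that $\times^{-1}(\xi) \searrow_{H_*(X)\otimes H_*(Y)} \times^{-1}(\xi')$. Here I would invoke Lemma \ref{splittimes} to write $\xi = \sum_i \alpha_i\times\beta_i$ and $\xi' = \sum_j \alpha'_j\times\beta'_j$ with $\alpha_i\nearrow_X\alpha'_j$ and $\beta_i\nearrow_Y\beta'_j$ for all $i$ and $j$, so that $\times^{-1}(\xi) = \sum_i \alpha_i\otimes\beta_i$ and $\times^{-1}(\xi') = \sum_j \alpha'_j\otimes\beta'_j$. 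Expanding the relevant tensor product as
\[
\left(\sum_i \alpha_i\otimes\beta_i\right) \otimes \left(\sum_j \alpha'_j\otimes\beta'_j\right) = \sum_{i,j} \alpha_i\otimes\beta_i\otimes\alpha'_j\otimes\beta'_j,
\]
every summand is, by Proposition \ref{tensor}, a generator of $R^{\searrow_{H_*(X)\otimes H_*(Y)}}$, since $\alpha_i\nearrow_X\alpha'_j$ and $\beta_i\nearrow_Y\beta'_j$. Hence the whole sum lies in $R^{\searrow_{H_*(X)\otimes H_*(Y)}}$, which gives the desired pointing relation, and Lemma \ref{1.5} finishes the argument.

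I expect the main obstacle to lie not in the theorem itself but in Lemma \ref{splittimes}, whose conclusion is precisely engineered for this proof. The delicate point is the uniform ``for all $i$ and $j$'' quantification: it is exactly this that guarantees every cross-term $\alpha_i\otimes\beta_i\otimes\alpha'_j\otimes\beta'_j$ in the expansion above is a generator, rather than only the diagonal terms. With Lemmas \ref{crossprop} and \ref{splittimes} in hand, the remaining work is the bilinear bookkeeping indicated above together with the two invocations of Lemma \ref{1.5}.
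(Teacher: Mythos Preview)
Your proposal is correct and follows essentially the same approach as the paper: both directions rest on Lemma \ref{crossprop} and Lemma \ref{splittimes} together with Proposition \ref{tensor}, and the only difference is cosmetic---you invoke Lemma \ref{1.5} to reduce to the generating relations, whereas the paper carries out that linearization by hand (expanding $\times(\theta)\otimes\times(\theta')$ as a sum over generators and using injectivity of $\times\otimes\times$). Your version is slightly cleaner for exactly the reason you note: Lemma \ref{1.5} lets you treat a single pair $\xi\nearrow_{X\times Y}\xi'$ rather than a sum indexed by $r$.
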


\begin{proof}
	Let $\theta, \theta' \in H_*(X)\otimes H_*(Y)$, and suppose that $\theta \searrow_{H_*(X)\otimes H_*(Y)} \theta'$. Then $\theta \otimes \theta' \in R^{\searrow_{H_\ast(X)\otimes H_\ast(Y)}}$ and so, by Proposition \ref{tensor}, $$\theta \otimes \theta' = \sum_i \lambda_i v_i \otimes w_i \otimes v'_i \otimes w'_i, \quad \lambda_i \in \K, \medspace v_i \nearrow_X v'_i, \medspace w_i \nearrow_Y w'_i.$$ We want to prove that $\times (\theta) \searrow_{X\times Y} \times (\theta')$, i.e., $\times (\theta) \otimes \times (\theta') \in R^{\searrow_{X\times Y}}$. We have that 
	\iffalse 	
	\begin{eqnarray*}
		\times (\theta) \otimes \times (\theta') & = & \times \otimes \times (\theta \otimes \theta') \\
		& = & \times \otimes \times \Big(  \sum_i \lambda_i v_i \otimes w_i \otimes v'_i \otimes w'_i \Big) \\
		& = & \sum_i \lambda_i \times \otimes \times \Big( v_i \otimes w_i \otimes v'_i \otimes w'_i  \Big) \\
		& = & \sum_i \lambda_i  \big( v_i \times w_i \otimes v'_i \times w'_i  \big)
	\end{eqnarray*}
	\fi 
	\[\times (\theta) \otimes \times (\theta') = \sum_i \lambda_i  ( v_i \times w_i) \otimes (v'_i \times w'_i)\]
	with $v_i \times w_i \nearrow_{X\times Y} v'_i \times w'_i$, which follows from Lemma \ref{crossprop}. Hence ${\times (\theta) \otimes \times (\theta') \in R^{\searrow_{X\times Y}}}$.

	Suppose now that $\times (\theta) \searrow_{X\times Y} \times (\theta')$. Then, by Proposition \ref{1.3}, $$\times (\theta ) \otimes \times (\theta') \in \langle \xi\otimes \xi' \mid  \xi\nearrow_{X\times Y} \xi' \rangle,$$ i.e., $$\times (\theta) \otimes \times(\theta') = \sum_r \lambda_r \xi_r \otimes \xi'_r,$$ with \(\lambda_r \in \K\) and $\xi_r \nearrow_{X\times Y} \xi'_r$. By Lemma \ref{splittimes}, there exist
	\[\alpha_{r,i_r}, \alpha'_{r,j_r} \in H_\ast (X), \;\beta_{r,i_r}, \beta'_{r,j_r} \in H_\ast (Y)  \quad (i_r = 1, \dots, m_r,\, j_r = 1, \dots, n_r)\]
	%$$\alpha_{r,i_r}, \alpha'_{r,j_r} \in H_\ast (X)$$ and $$\beta_{r,i_r}, \beta'_{r,j_r} \in H_\ast (Y)$$ \((i_r = 1, \dots, n_r, j_r = 1, \dots, m_r)\) 
	such that	
	$$\xi_r = \sum_{i_r} \alpha_{r, i_r} \times \beta_{r,i_r}, \quad \xi'_r = \sum_{j_r} \alpha'_{r, j_r} \times \beta'_{r,j_r},$$
and	
	$$ \forall \, i_r, j_r\quad \alpha_{r,i_r} \nearrow_X \alpha'_{r,j_r}, \; \beta_{r,i_r} \nearrow_Y \beta'_{r,j_r}.$$ 	
	So, 	
	\begin{eqnarray*}
		\times \otimes \times (\theta \otimes \theta' ) & = & \times (\theta) \otimes \times (\theta') \\
		& = & \sum_r \lambda_r \xi_r \otimes \xi'_r \\
		& = & \sum_r \lambda_r  \sum_{i_r, j_r} \big( \alpha_{r,i_r} \times \beta_{r,i_r}  \big) \otimes   \big( \alpha'_{r,j_r} \times \beta'_{r,j_r}  \big)  \\
	%	& = & \sum_r \lambda_r \Big( \sum_{i_r, j_r} \big( \times  (\alpha_{r,i_r} \otimes \beta_{r,i_r}) \big) \otimes  \big( \times  (\alpha'_{r,j_r} \otimes \beta'_{r,j_r}) \big)   \Big) \\
	%	& = & \sum_r \lambda_r  \Big(  \times \otimes \times \Big( \sum_{i_r, j_r} (\alpha_{r,i_r} \otimes \beta_{r,i_r}) \otimes (\alpha'_{r,j_r} \otimes \beta'_{r,j_r})  \Big)   \Big) \\
		& = & \times \otimes \times \Big( \sum_r \sum_{i_r, j_r} \lambda_r  (\alpha_{r,i_r} \otimes \beta_{r,i_r}) \otimes (\alpha'_{r,j_r} \otimes \beta'_{r,j_r})  \Big).
	\end{eqnarray*}	
	Hence, by injectivity of $\times \otimes \times$, $$\theta \otimes \theta' = \sum_r \sum_{i_r, j_r}  \lambda_r \alpha_{r,i_r} \otimes \beta_{r,i_r} \otimes \alpha'_{r,j_r} \otimes \beta'_{r,j_r}.$$ Thus, $\theta \searrow_{H_*(X)\otimes H_*(Y)} \theta'$.	 
\end{proof}

\begin{ex}
	In this example, we will calculate the homology digraph of the torus \(\vec S^1\times O^1\) from the homology digraphs of \(\vec S^1\) and \(O^1\).
    We suppose that $\K = \Z_2$. Let $\alpha \in H_0 (\vec S^1)$, $\beta \in H_1 (\vec S^1)$ and $\gamma \in H_0(O^1)$, $\delta \in H_1 (O^1)$ be the generators. By Examples \ref{hdorddircirc}(i) and \ref{hdorddircirc}(iii), we have that $\alpha \searrow_{\vec S^1} \alpha$, $\alpha \searrow_{\vec S^1} \beta$, $ \beta \searrow_{\vec S^1} \alpha$, $\beta \searrow_{\vec S^1} \beta$ and that $\gamma \searrow_{O^1} \gamma$, $\gamma \searrow_{O^1} \delta$, $\delta \searrow_{O^1} \gamma$, $\delta \centernot{\searrow}_{O^1}\delta$. The defining vector space of $\searrow_{H_\ast (\vec S^1) \otimes H_\ast (O^1)}$ is 
\begin{align*}
	R & =  \langle v\otimes w \otimes v' \otimes w' \mid v\searrow_{\vec{S}^1} v', w\searrow_{O^1} w'   \rangle \\  
	& =  \langle  \alpha \otimes \gamma \otimes \alpha \otimes \gamma; \\  
	 & \quad \medspace \; \,   \alpha \otimes \gamma \otimes \beta \otimes \gamma, \beta \otimes \gamma \otimes \alpha \otimes \gamma, \alpha \otimes \gamma \otimes \alpha \otimes \delta, \alpha \otimes \delta \otimes \alpha \otimes \gamma; \\
	  & \quad \medspace \; \, \alpha \otimes \gamma \otimes \beta \otimes \delta, \beta \otimes \gamma \otimes \alpha \otimes \delta,  \alpha \otimes \delta \otimes \beta \otimes \gamma, \beta \otimes \delta \otimes \alpha \otimes \gamma, \\
	 & \quad \medspace \; \quad \beta \otimes \gamma \otimes \beta \otimes \gamma; \\
	 & \quad \medspace \; \,   \beta \otimes \gamma \otimes \beta \otimes \delta, \beta \otimes \delta \otimes \beta \otimes \gamma \rangle.
\end{align*}
The nonzero elements of $H_\ast (\vec S^1) \otimes H_\ast (O^1)$ are 
$\alpha \otimes \gamma$, $\alpha \otimes \delta$, $\beta \otimes \gamma$, $\alpha \otimes \delta + \beta \otimes \gamma$, and $\beta \otimes \delta$.
By inspection,  
$$\alpha \otimes \gamma, \beta \otimes \gamma \searrow_{H_\ast (\vec S^1) \otimes H_\ast (O^1)} \alpha \otimes \gamma, \alpha \otimes \delta, \beta \otimes \gamma, \alpha \otimes \delta + \beta \otimes \gamma, \beta \otimes \delta;$$
$$\alpha \otimes \delta, \alpha \otimes \delta + \beta \otimes \gamma, \beta \otimes \delta \searrow_{H_\ast (\vec S^1) \otimes H_\ast (O^1)} \alpha \otimes \gamma, \beta \otimes \gamma;$$
$$\alpha \otimes \delta, \alpha \otimes \delta + \beta \otimes \gamma, \beta \otimes \delta \not \searrow_{H_\ast (\vec S^1)\otimes H_\ast (O^1)} \alpha \otimes \delta, \alpha \otimes \delta + \beta \otimes \gamma, \beta \otimes \delta.$$
The nonzero homology classes of the torus are $\alpha\times \gamma$, $\alpha\times \delta$, $\beta\times \gamma$, $\alpha\times \delta + \beta \times \gamma$, and $\beta \times \delta$. By Theorem \ref{cross}, the cross products $\alpha \times \gamma$ and $\beta\times \gamma$ point to all nonzero homology classes and the only nonzero homology classes pointed to by  $\alpha\times \delta$,  $\alpha\times \delta + \beta \times \gamma$, and $\beta \times \delta$ are $\alpha\times \gamma$ and $\beta \times \gamma$.
\end{ex}

\begin{rem}
    The arguments of this section can easily be generalized to establish the following relative version of Theorem \ref{cross}: if \(A\) is open in \(X\) and \(B\) is open in \(Y\), then the relative cross product \[\times \colon H_*(X,A)\otimes H_*(Y,B) \to H_*(X\times Y, A\times Y\cup X \times B),\; \alpha\otimes \beta \mapsto \alpha \times \beta\]
    is an isomorphism of directional graded vector spaces.     
\end{rem}

\end{sloppypar}

\bibliography{refs}
\bibliographystyle{plain}

\end{document}